%------------------------------------------------------------------------------
% Here please write the date of submission of paper or its revisions:
%------------------------------------------------------------------------------
%
\documentclass[12pt, reqno]{amsart}
\usepackage{amsmath, amsthm, amscd, amsfonts, amssymb, graphicx, color, mathrsfs}
\usepackage[bookmarksnumbered, colorlinks, plainpages]{hyperref}

\usepackage{slashed}
\usepackage[capitalize]{cleveref}

%\textheight 22.5truecm \textwidth 14.5truecm
%\setlength{\oddsidemargin}{0.35in}\setlength{\evensidemargin}{0.35in}

%\setlength{\topmargin}{-.5cm}
%Proof: https://math.stackexchange.com/questions/486118/wieners-theorem-in-mathbbrn
%%%%%%%%%%%%%%%%%%%%%%%%%
\setlength{\textwidth}{15.2cm}
\setlength{\textheight}{22.7cm}
\setlength{\topmargin}{0mm}
\setlength{\oddsidemargin}{3mm}
\setlength{\evensidemargin}{3mm}
\setlength{\footskip}{1cm}

%%%%%%%%%%%%%%%%%%%%%%%%%%%

\newtheorem{theorem}{Theorem}[section]

\theoremstyle{definition}
\newtheorem{definition}[theorem]{Definition}

\theoremstyle{remark}
\newtheorem{remark}[theorem]{Remark}
\numberwithin{equation}{section}

\newcommand*\diff{\mathop{}\!\mathrm{d}}
\DeclareMathOperator{\supp}{supp}
\DeclareMathOperator*{\esssup}{ess\,sup}

\crefname{equation}{}{}

\begin{document}
\setcounter{page}{1}

\title[Estimates for operators on the torus. I]{ Estimates for pseudo-differential operators on the torus revisited. I}

\author[D. Cardona]{Duv\'an Cardona}
\address{
  Duv\'an Cardona:
  \endgraf
  Department of Mathematics: Analysis, Logic and Discrete Mathematics
  \endgraf
  Ghent University, Belgium
  \endgraf
  {\it E-mail address} {\rm duvanc306@gmail.com, duvan.cardonasanchez@ugent.be}
  }

\author[M. A. Mart\'inez]{Manuel Alejandro Mart\'inez}
\address{
  Manuel Alejandro Mart\'inez
  \endgraf
  Department of Mathematics
  \endgraf
 Universidad del Valle de Guatemala, Guatemala
  \endgraf
  {\it E-mail address} {\rm mar21403@uvg.edu.gt, manuelalejandromartinezf@gmail.com}
  }

\thanks{ Manuel Alejandro Mart\'inez has been supported by the {\it{Liderazgo en Ciencias}} scholarship of the Universidad del Valle de Guatemala.  Duv\'an Cardona is supported  by the FWO  Odysseus  1  grant  G.0H94.18N:  Analysis  and  Partial Differential Equations, by the Methusalem programme of the Ghent University Special Research Fund (BOF)
(Grant number 01M01021) and has been supported by the FWO Fellowship
Grant No 1204824N of the Belgian Research Foundation FWO}

     \keywords{Discrete Fourier analysis - Oscillating singular integrals - Periodic pseudo-differential operators - Torus}
    \subjclass[2010]{Primary 22E30; Secondary 58J40.}

\begin{abstract}
In this paper we prove $L^p$-estimates for H\"ormander classes of pseudo-differential operators on the torus $\mathbb{T}^n$. The results are presented in the context of the global symbolic calculus of Ruzhansky and Turunen on $\mathbb{T}^n\times\mathbb{Z}^n$ by using the discrete Fourier analysis on the torus which extends the usual ($\rho,\delta$)-Hörmander classes on $\mathbb{T}^n$. The main results extend \'Alvarez and Hounie's method for $\mathbb{R}^n$ to the torus, and Fefferman's $L^p$-boundedness theorem in the toroidal setting allowing the condition $\delta\geq\rho$. When $\delta \leq \rho$, our results recover the available estimates in the literature.
\end{abstract} 

\maketitle
\tableofcontents
\allowdisplaybreaks

\section{Introduction} In this paper we address the problem of the boundedness of pseudo-differential operators on the torus, $\mathbb{T}^n := \mathbb{R}^n/\mathbb{Z}^n$, associated to symbols in the periodic H\"ormander classes $S^m_{\rho,\delta}(\mathbb{T}^n\times \mathbb{Z}^n )$. Since the torus is a compact manifold without boundary, these classes agree with the standard ones defined by H\"ormander on the torus $\mathbb{T}^n,$ when $0\leq \delta<\rho\leq 1$ and $\rho\geq 
1-\delta,$ see Hörmander \cite{hormander}. On the other hand, the periodic H\"ormander classes differ from the ones defined by local coordinate systems, as in H\"ormander approach when $\delta\geq \rho$ , or when $\rho<1-\delta,$ (here we refer to the periodic H\"ormander classes to the ones introduced by Ruzhansky and Turunen \cite[Chapter~4]{ruzhansky-turunen}).  In consequence, the problem addressed in this paper is classical with several results available in the literature transferred from the classical boundedness properties of pseudo-differential operators in the Euclidean space, see e.g. Fefferman \cite{fefferman-Lp}, \'Alvarez and Hounie \cite{alvarez-hounie}, to any compact manifold without boundary with $0\leq \delta<\rho\leq 1$ and $\rho\geq 1-\delta.$ Note that this condition implies that $\rho>1/2,$ leaving a range of possible improvements for boundedness criteria when $\rho\leq 1/2,$ or more generally when $\rho<1-\delta.$

We recall that in the Euclidean case, Calder\'on and Vaillancourt proved that pseudo-differential operators with symbols in the class $S^{0}_{\rho,\rho}(\mathbb{R}^n\times\mathbb{R}^n)$ are bounded on $L^2(\mathbb{R}^n)$ for some $0\leq\rho<1$, see \cite{calderon-vaillancourt-1, calderon-vaillancourt-2}. This result cannot be extended when $\rho=1$, namely, there exist symbols in $S^{0}_{1,1}(\mathbb{R}^n\times\mathbb{R}^n)$ whose corresponding pseudo-differential operators are not bounded on $L^2$, see e.g. \cite{duoandikoetxea} for a classical argument of this fact due to H\"ormander. Moreover, Fefferman \cite{fefferman-Lp} proved the $L^\infty(\mathbb{R}^n)$-$\mathrm{BMO}(\mathbb{R}^n)$ boundedness of pseudo-differential operators with symbols in the class $S^{m}_{\rho,\delta}(\mathbb{R}^n\times\mathbb{R}^n)$, for $m=-n(1-\rho)/2$ where $0\leq \delta<\rho\leq 1$. Fefferman also obtained the $L^p(\mathbb{R}^n)$ boundedness of these classes for $m\leq -n(1-\rho)|1/p - 1/2|$ and $1<p<\infty$. In view of classical examples due to Wainger and Hirschman, Fefferman's result is sharp for Fourier multipliers. We also remark that the historical development of the problem of the $L^p$-boundedness of pseudo-differential operators has been discussed on  $\mathbb{R}^n$ e.g. in \cite{nagase, wang}.

Pseudo-differential operators with symbols in Hörmander classes can be defined in $C^\infty$-manifolds by using local charts. Hence, we shall view the torus $\mathbb{T}^n:=\mathbb{R}^n/\mathbb{Z}^n$ as a quotient additive group and a $n$-manifold, with the prefered atlas of local coordinate systems given by the restriction mapping $x\mapsto x + \mathbb{Z}^n$ to the open sets $\Omega \subset \mathbb{R}^n$, see McLane \cite{mclane}. We notice that in \cite{agranovich}, Agranovich gives a global definition of pseudo-differential operators on the circle $\mathbb{S}^1=\mathbb{T}^1$, instead of the local formulation considering the circle as a manifold.  By using the Fourier transform, this definition was extended to the torus $\mathbb{T}^n$. Moreover, it has been proved that the $(\rho,\delta)$-classes by Agranovich and Hörmander are equivalent, namely, this is the equivalence theorem due to  McLane \cite{mclane}. Here, we consider toroidal pseudo-differential operators in the context of the pseudo-differential calculus on the torus developed by Ruzhansky, Turunen and Vainniko \cite{ruzhansky-turunen2, ruzhansky-turunen}. Also, $L^p$-bounds on the circle that can be extended to the torus can be found in \cite{wong} in the classical setting of the Calder\'on-Zygmund theory. On the other hand, the toroidal analogous of Fefferman's result was proved by Delgado in \cite{delgado} for the torus and extended to compact Lie groups by Delgado and Ruzhansky in \cite{delgado-ruzhansky}, however is still required that $\delta<\rho$. In \cite{cardona-ruzhansky-subelliptic} this result has been extended to the borderline $0\leq \delta\leq \rho$ for subelliptic Hörmander classes on compact Lie groups. For other works on the $L^p$-boundedness of pseudo-differential operators we refer the reader to \cite{cardona, molahajloo-wong, ruzhansky-turunen-quant}.

In this paper we show that, as in the Euclidean case, the boundedness of pseudo-differential operators for the classes $S^m_{\rho,\delta}(\mathbb{T}^n\times \mathbb{Z}^n )$  can be analysed even if one considers Hounie's approach to this problem, namely, if the parameters $\rho$ and $\delta$ are considered satisfying the conditions $0\leq \delta<1,$ and $0<\rho\leq 1,$ allowing for the forbidded range $0\leq \rho\leq 1/2.$ 

Note that, for $0\leq \delta<1,$ and $0<\rho\leq 1,$ \'Alvarez and Hounie \cite{alvarez-hounie}, proved the $L^p(\mathbb{R}^n)$-$L^q(\mathbb{R}^n)$ boundedness of  pseudo-differential operators for $p\leq q$ even when $\delta \geq \rho$. Here we state their result where $\Psi^m_{\rho, \delta}(\mathbb{R}^n \times \mathbb{R}^n)$ denotes the Hörmander class of pseudo-differential operators on $\mathbb{R}^n$ \cite{hormander}.

\begin{theorem}
    Let $T \in \Psi^m_{\rho, \delta}(\mathbb{R}^n \times \mathbb{R}^n) $, let $0 < \rho \leq 1$, let $0 \leq \delta < 1$ and let $\lambda = \max\{ 0, (\delta-\rho)/2 \}$. Then:
    \begin{itemize}
        \item[(a)] if $m \leq - n [(1-\rho)/2 + \lambda]$, the operator $T$ extends to a bounded operator from the Hardy space $H^1(\mathbb{R}^n)$ into $L^1(\mathbb{R}^n)$, from $L^1(\mathbb{R}^n)$ into weak-$L^1(\mathbb{R}^n)$,\footnote{Here, as usual, for $Z=\mathbb{R}^n$ or $Z=\mathbb{T}^n,$ the weak-$L^1(Z)$ space is determined by the seminorm $\|f\|_{weak-L^1(Z)} := \sup_{\lambda > 0} \lambda|\{x \in Z : |f(x)| > \lambda \}| < \infty.$} and from $L^\infty(\mathbb{R}^n)$ into $\mathrm{BMO}(\mathbb{R}^n)$.\\
        \item[(b)] if $m \leq -n[(1-\rho)|1/p - 1/2| + \lambda]$, then $T$ extends to a bounded operator on $L^p(\mathbb{R}^n)$ for $1<p<\infty$.
    \end{itemize}    
    \label{theo:mainR}
\end{theorem}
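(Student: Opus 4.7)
The plan is to follow the Littlewood--Paley plus Calder\'on--Zygmund strategy, using the parameter $\lambda$ to absorb the additional loss coming from the forbidden range $\delta\geq \rho$. Fix a radial dyadic partition of unity $\sum_{j\geq 0}\phi_{j}\equiv 1$ on $\mathbb{R}^{n}$ with $\phi_{j}$ supported in $\{|\xi|\sim 2^{j}\}$ for $j\geq 1$, set $a_{j}:=a\phi_{j}$ and $T=\sum_{j}T_{j}$. A standard integration by parts in $\xi$ yields the kernel bound
\[
|\partial_{x}^{\alpha}\partial_{y}^{\beta}K_{j}(x,y)|\leq C_{\alpha,\beta,N}\,2^{j(n+m+\delta|\alpha|+|\beta|)}(1+2^{j\rho}|x-y|)^{-N},
\]
valid for every $N\geq 0$, which will feed every subsequent step.

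My first task would be to establish $L^{2}$-boundedness of $T$ when $m\leq -n\lambda$. If $\delta<\rho$ this reduces to a direct application of the Calder\'on--Vaillancourt theorem to $a\in S^{0}_{\delta,\delta}\supset S^{0}_{\rho,\delta}$. If $\delta\geq \rho$, the extra decay $2^{-jn\lambda}$ encoded in the order is exploited through a Cotlar--Stein almost-orthogonality argument between the dyadic pieces $T_{j}$: the compositions $T_{j}^{*}T_{k}$ are estimated via non-stationary phase to gain rapid decay in $|j-k|$ at high frequencies, and the resulting series converges thanks to the gain $2^{-jn\lambda}$.

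Assuming the strengthened hypothesis $m\leq -n(1-\rho)/2-n\lambda$, I would next prove the three endpoint statements in (a). For the weak-$(1,1)$ bound, perform a Calder\'on--Zygmund decomposition $f=g+\sum_{k}b_{k}$ at height $\alpha$, with each $b_{k}$ supported on a cube $Q_{k}$ of side $r_{k}$; for each $b_{k}$ split $Tb_{k}=\sum_{j}T_{j}b_{k}$ according as $2^{-j\rho}\geq r_{k}$ (controlled by Cauchy--Schwarz together with the $L^{2}$ bound above) or $2^{-j\rho}<r_{k}$ (controlled by integrating the kernel estimate after using the vanishing moment of $b_{k}$ and the $y$-smoothness of $K_{j}$). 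The additional factor $2^{-jn(1-\rho)/2}$ supplied by the stronger order makes both sums finite. The $H^{1}\to L^{1}$ bound follows from the same dyadic split applied to $(1,\infty)$-atoms, and the $L^{\infty}\to \mathrm{BMO}$ estimate is then obtained by duality, since the transposed symbol lies in the same H\"ormander class modulo a smoothing remainder via the Euclidean asymptotic expansion. Part (b) then follows by Stein analytic interpolation applied to the family $T^{z}$ with symbol $a(x,\xi)(1+|\xi|^{2})^{-z/2}$: interpolating between the $L^{2}\to L^{2}$ estimate at $\mathrm{Re}(z)=0$ and the $H^{1}\to L^{1}$ (respectively $L^{\infty}\to\mathrm{BMO}$) estimate at $\mathrm{Re}(z)=n(1-\rho)/2$ recovers the claimed threshold for all $1<p<\infty$.

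The principal obstacle is the forbidden regime $\delta\geq \rho$ appearing in the $L^{2}$ step and in the smoothness half of the Calder\'on--Zygmund argument: the usual Calder\'on--Zygmund kernel condition is violated because $\partial_{x}K_{j}$ carries a factor $2^{j\delta}$ rather than $2^{j\rho}$, and no amount of pointwise kernel analysis alone suffices. The parameter $\lambda=(\delta-\rho)/2$ quantifies this defect exactly, and the delicate Cotlar--Stein book-keeping needed to show that the decay $2^{-jn\lambda}$ precisely restores the missing almost-orthogonality between dyadic pieces is the most technical point of the argument.
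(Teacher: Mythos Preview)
This statement is not proved in the paper; it is quoted from \'Alvarez and Hounie as the Euclidean model for the paper's main theorem on $\mathbb{T}^n$, whose proof then adapts their method. So the relevant comparison is between your outline and the \'Alvarez--Hounie scheme that the paper reproduces on the torus.

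Against that scheme, your sketch has one genuine gap. You obtain the $L^\infty\to\mathrm{BMO}$ bound by duality from $H^1\to L^1$ applied to $T^*$, asserting that the transposed symbol lies in the same class ``via the Euclidean asymptotic expansion.'' The terms of that expansion lie in $S^{m-(\rho-\delta)|\alpha|}_{\rho,\delta}$ and therefore fail to improve when $\delta\geq\rho$; the standard adjoint calculus is simply unavailable in precisely the forbidden range the theorem is meant to cover. \'Alvarez and Hounie (and the paper, on $\mathbb{T}^n$) instead prove the $L^\infty\to\mathrm{BMO}$ estimate \emph{directly}: for a ball $B(z,\sigma)$ with $\sigma<1$ one splits $f$ across the anisotropically enlarged ball $B'=B(z,2\sigma^\rho)$, controls the near part via an $L^2\to L^{2/\rho}$ bound (obtained from Hounie's $L^2$ theorem for the order $-n\lambda$ operator $J^{n(1-\rho)/2}T$ together with Hardy--Littlewood--Sobolev for $J^{-n(1-\rho)/2}$), and the far part via the kernel condition
\[
\sup_{|y-z|\leq\sigma}\int_{|x-z|>2\sigma^\rho}|k(y,x)-k(z,x)|\,dx\leq C.
\]
That condition is itself proved not by pure pointwise integration-by-parts but by applying Hounie's $L^2$ theorem to auxiliary frequency-localized symbols; this is where the extra $\lambda$ in the order hypothesis is actually spent.

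Your route to the weak-$(1,1)$ bound through the dyadic split at $2^{-j\rho}=r_k$ is Fefferman's original scheme rather than the \'Alvarez--Milman operator-valued-kernel framework the paper uses, but the $\rho$-anisotropy is present in your threshold and this piece is workable either way. The $L^2$ step via Cotlar--Stein on the dyadic pieces $T_j$ is also more delicate than you indicate (for $|j-k|$ large the pieces need not have disjoint output frequencies, since $T_j^*$ is not a Fourier localization), but this is Hounie's theorem and can be cited.
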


In this paper, by following the approach of \'Alvarez and Hounie \cite{alvarez-hounie}, we begin by proving some useful estimates for the Schwartz kernel of toroidal pseudo-differential operators. Then, we use these results to prove the $L^\infty(\mathbb{T}^n)$-$\mathrm{BMO}(\mathbb{T}^n)$ and the $H^1(\mathbb{T}^n)$-$L^1(\mathbb{T}^n)$ boundedness properties of these operators. These estimates solve in the torus some open problems addressed in \cite{cardona-ruzhansky-subelliptic}. Finally, we employ the Fefferman-Stein interpolation argument to obtain the $L^p(\mathbb{T}^n)$-boundedness of our class for $1<p<\infty$. Moreover, we prove the toroidal analogous of a result  by \'Alvarez and Milman \cite[Theorem~4.1]{alvarez-milman} for operators with operator valued kernels in order to obtain bounded toroidal pseudo-differential operators from $L^1(\mathbb{T}^n)$ into weak-$L^1(\mathbb{T}^n)$. On the torus, the validity of this weak (1,1) estimate was an open problem. Here we state the main result of this paper:
\begin{theorem}
    Let $T \in \Psi^m_{\rho, \delta}(\mathbb{T}^n \times \mathbb{Z}^n) $, let $0 < \rho \leq 1$, let $0 \leq \delta < 1$ and let $\lambda = \max\{ 0, (\delta-\rho)/2 \}$. Then:
    \begin{itemize}
        \item[(a)] if $m \leq - n [(1-\rho)/2 + \lambda]$, the operator $T$ extends to a bounded operator from the Hardy space $H^1(\mathbb{T}^n)$ into $L^1(\mathbb{T}^n)$, from $L^1(\mathbb{T}^n)$ into weak-$L^1(\mathbb{T}^n)$, and from $L^\infty(\mathbb{T}^n)$ into $\mathrm{BMO}(\mathbb{T}^n)$.\\
        \item[(b)] if $m \leq -n[(1-\rho)|1/p - 1/2| + \lambda]$, then $T$ extends to a bounded operator on $L^p(\mathbb{T}^n)$ for $1<p<\infty$.
    \end{itemize}    
    \label{theo:mainT}
\end{theorem}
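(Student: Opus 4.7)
The plan is to adapt the \'Alvarez--Hounie scheme (Theorem~\ref{theo:mainR}) to the toroidal quantization of Ruzhansky--Turunen, using the discrete Fourier analysis on $\mathbb{T}^n\times\mathbb{Z}^n$. The starting point is to decompose the symbol $\sigma(x,\xi)$ via a Littlewood--Paley partition of unity in $\xi$, writing $\sigma=\sum_{j\geq 0}\sigma_j$ with $\mathrm{supp}_\xi\,\sigma_j\subset\{|\xi|\sim 2^j\}$, so that $T=\sum_j T_j$ with Schwartz kernels $K_j(x,y)=\sum_{\xi\in\mathbb{Z}^n}e^{2\pi i(x-y)\cdot\xi}\sigma_j(x,\xi)$. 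Using the toroidal difference operators $\Delta_\xi^\alpha$ in place of derivatives, together with summation by parts on $\mathbb{Z}^n$, one obtains for each $j$ the pointwise bounds of the form $|K_j(x,y)|\lesssim 2^{jn}$ and $|K_j(x,y)|\lesssim 2^{j(n-N\rho)}|x-y|_{\mathbb{T}^n}^{-N}$ for arbitrary $N$, as well as the corresponding smoothness estimates $|\nabla_xK_j(x,y)|+|\nabla_yK_j(x,y)|\lesssim 2^{j(\delta+n-N\rho)}|x-y|_{\mathbb{T}^n}^{-N}$. These toroidal kernel estimates replace the \'Alvarez--Hounie Euclidean kernel estimates and carry the weight $2^{jm}$ once the symbol normalization $m\leq -n[(1-\rho)/2+\lambda]$ is used.

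Granted the kernel estimates, I would establish the $L^\infty\!-\!\mathrm{BMO}$ boundedness by the standard argument: for $f\in L^\infty(\mathbb{T}^n)$ and a small ball $B=B(x_0,r)\subset\mathbb{T}^n$, split $f=f_1+f_2$ with $f_1=f\chi_{2B}$; control $\|Tf_1\|_{L^2(B)}$ by the toroidal $L^2$ bound (Calder\'on--Vaillancourt on $\mathbb{T}^n$, valid because $m+n\lambda\leq 0$), and bound the oscillation of $Tf_2$ by summing the kernel estimates over the dyadic blocks $2^j<1/r$ and $2^j\geq 1/r$ separately, exactly as in the Euclidean case. The $H^1\!-\!L^1$ bound is obtained dually, via the atomic decomposition of $H^1(\mathbb{T}^n)$: for an atom $a$ supported in a ball $B$ of radius $r$ with $\|a\|_\infty\leq |B|^{-1}$ and $\int a=0$, one uses the $L^2$ estimate on $2B$ (Cauchy--Schwarz) and exploits the cancellation of $a$ together with the Lipschitz-type kernel bounds away from $B$.

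The $L^p$ statement (b) for $1<p<\infty$ then follows by the Fefferman--Stein interpolation between the $H^1\!-\!L^1$ and $L^\infty\!-\!\mathrm{BMO}$ endpoints and the $L^2$ bound, by tracking the exact dependence of the symbol order $m$ on $|1/p-1/2|$. The $L^p$ conclusion for $p=2$ is the Calder\'on--Vaillancourt baseline, and the extra factor $n\lambda$ comes from the symbol order loss when $\delta>\rho$.

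The main obstacle is the weak $(1,1)$ estimate. The Schwartz kernel of $T$ does not satisfy the pointwise H\"ormander regularity condition when $\delta\geq\rho$, so the classical Calder\'on--Zygmund decomposition does not apply directly. To overcome this, I would prove a toroidal version of the \'Alvarez--Milman theorem \cite[Theorem~4.1]{alvarez-milman} for operators with operator-valued kernels: after performing a CZ decomposition of $f$ at height $\alpha$, with good part $g$ and bad part $b=\sum_{Q}b_Q$, the good part is treated by $\|Tg\|_{L^2}^2\lesssim \alpha\|f\|_{L^1}$, while for each $b_Q$ the contribution of $\sum_{j}T_j b_Q$ is handled by grouping the frequency blocks according to the side-length of $Q$ and estimating each group in $L^2$ with a variance-type smoothness condition on the blocks $\{T_j\}$, rather than on the pointwise kernel. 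The technical core is therefore the verification that the dyadic pieces $T_j$ satisfy a discrete analog of the operator-valued H\"ormander condition of \cite{alvarez-milman} with constants summable in $j$ under the hypothesis $m\leq -n[(1-\rho)/2+\lambda]$; this is precisely the point at which the toroidal difference calculus and the condition $\delta<1$ are crucially used, and where the proof departs most substantially from the Euclidean argument.
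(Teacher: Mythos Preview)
Your overall architecture matches the paper's --- kernel estimates, the two endpoint bounds, an \'Alvarez--Milman argument for the weak $(1,1)$, and Fefferman--Stein interpolation for (b). However, there is a genuine gap in the endpoint arguments. You split $f=f_1+f_2$ with $f_1=f\chi_{2B}$ and control the near part using only the Calder\'on--Vaillancourt $L^2$ bound. This fails when $\rho<1$: the kernel of an operator of the critical order does \emph{not} satisfy the standard H\"ormander condition over the exterior of $2B$. Indeed, the dyadic piece $K_j$ is concentrated at scale $2^{-j\rho}$, which for $2^j\sim 1/r$ is $r^\rho\gg r$, and the pointwise bounds you state for $K_j$ are then not summable in $j$ when integrated over $\{|x-z|>2r\}$. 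The paper (following \'Alvarez--Hounie) instead splits at the $\rho$-dilated ball $B'=B(z,2\sigma^\rho)$, and proves the integral condition $\int_{|x-z|>2\sigma^\rho}|k(x,y)-k(x,z)|\,dx\leq C$ by an $L^2$-duality argument at each frequency scale that invokes Hounie's $L^2$ theorem (Theorem~\ref{theo:22boundR}); pointwise dyadic kernel bounds are too crude to yield this.

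With the enlarged $B'$, the local estimate no longer closes with the $L^2$ bound alone: the paper needs the mapping properties $T:L^{2/(2-\rho)}\to L^2$ and $T:L^2\to L^{2/\rho}$, obtained by factoring $T=J^{-n(1-\rho)/2}\bigl(J^{n(1-\rho)/2}T\bigr)$ and applying Hardy--Littlewood--Sobolev to $J^{-n(1-\rho)/2}$. These same off-$L^2$ bounds (specifically $L^{q}\to L^2$ with $1/q=1/2+\beta/n$, $\beta=n(1-\rho)/2$) are also a structural hypothesis in the paper's toroidal \'Alvarez--Milman theorem for the weak $(1,1)$ estimate. None of these ingredients --- the $\rho$-dilated splitting ball, the $L^2$-based (rather than pointwise) scale-by-scale kernel estimate, or the Bessel-potential $L^p$--$L^q$ bounds --- appear in your proposal, and without them the scheme you outline does not close for $\rho<1$. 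As a secondary point, the paper does not use summation by parts on $\mathbb{Z}^n$ at all: it first extends the symbol to $\tilde p\in S^m_{\rho,\delta}(\mathbb{T}^n\times\mathbb{R}^n)$ and then works with a continuous decomposition $k(x,y)=\int_1^\infty k(x,y,t)\,dt/t$, which is what makes the $L^2$-duality step available.
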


\begin{remark}
    Now we discuss our main result. When $\delta < \rho$, we recover the order in Fefferman's estimate proved by Delgado \cite{delgado}, however, the estimate in \cite{delgado} considers symbols with limited regularity. Moreover, when $\delta \leq \rho$, in the case of the torus, we obtain the result proved in \cite{cardona-ruzhansky-subelliptic}. Since the Hardy space $H^1(\mathbb{T}^n)$ and the $\mathrm{BMO}(\mathbb{T}^n)$ space are not stable under multiplication by test functions, \cref{theo:mainT} cannot be obtained by  transferring the result by \'Alvarez and Hounie \cite{alvarez-hounie} to the torus using local partitions of unity. Finally, we remark that the Hörmander classes are stable under change of coordinates only when $\rho \geq  1 -\delta$. Therefore, the weak (1,1) estimate in \cref{theo:mainT} cannot be obtained by applying \'Alvarez and Hounie's result to the torus $\mathbb{T}^n,$ understood as a closed manifold, by the use of local coordinate systems and standard arguments using  partitions of unity.
\end{remark}
This  work is organised as follows. In \cref{section:prelims} we discuss the preliminaries about discrete Fourier analysis on the torus, as developed by Ruzhansky and Turunen \cite{ruzhansky-turunen}, and we define useful function spaces for this paper. In \cref{section:kernel-estimates} we prove estimates for the Shwartz kernel of toroidal pseudo-differential operators analogous to those proved in \cite{alvarez-hounie}. Finally, in \cref{section:boundedness} we prove our main results, particularly those stated in \cref{theo:mainT}.

\section{Preliminaries}
\label{section:prelims}
In this section we present the preliminaries about the Fourier analysis on the torus and on the toroidal pseudo-differential calculus as developed in \cite{ruzhansky-turunen}.

\begin{definition}[Periodic functions] 
    A function $f: \mathbb{R}^n \rightarrow X$, where $X$ is a Banach space, is 1-periodic if $f(x) = f(x + k)$ for every $ x \in \mathbb{R}^n $ and $ k \in \mathbb{Z}^n $. We can identify this function with one defined in $\mathbb{T}^n := \mathbb{R}^n / \mathbb{Z}^n $, which is a compact manifold without boundary. Moreover, the space of 1-perodic functions $m$ times continuously differentiable is denoted by $C^m(\mathbb{T}^n;X)$. The subset of these functions that have compact support is denoted by $C^m_0(\mathbb{T}^n;X)$  The test functions are elements of the space $C^\infty(\mathbb{T}^n;X) := \bigcap_m C^m(\mathbb{T}^n;X)$. When $X=\mathbb{C}$, we will simply use $C^\infty(\mathbb{T}^n)$.
\end{definition}
In order to define the class of pseudo-differential operators on $\mathbb{T}^n$, we first need to define the Fourier transform for 1-periodic smooth functions.
\begin{definition}[Fourier Transform on $\mathbb{T}^n$]
    For $f \in C^\infty(\mathbb{T}^n)$ we define the  \textit{toroidal Fourier transform} $\mathcal{F}_{\mathbb{T}^n}$ as 
    \begin{equation*}
        (\mathcal{F}_{\mathbb{T}^n}f)(\xi) := \int_{\mathbb{T}^n} e^{-i2\pi x \cdot \xi}f(x)\diff x, 
    \end{equation*}
    for $\xi \in \mathbb{Z}^n$.
\end{definition}
Thanks to the structure of the torus $\mathbb{T}^n$, the corresponding frequency domain of 1-periodic smooth functions is the lattice $\mathbb{Z}^n$. Also, it can be proved that the corresponding functions $\mathcal{F}_{\mathbb{T}^n}f$ satisfy certain regularity conditions. In fact, they belong to the Schwartz space defined below. 
\begin{definition}[Schwartz space $\mathcal{S}(\mathbb{Z}^n) $]
    We say $\varphi \in \mathcal{S}(\mathbb{Z}^n)$, that is a \textit{rapidly decaying} function $\mathbb{Z}^n \rightarrow \mathbb{C}$, if for given $0<M<\infty$, there exists $C_{\varphi M} > 0$ such that 

    \begin{equation*}
        |\varphi(\xi)| \leq C_{\varphi M} \langle\xi\rangle^{-M} \quad , \quad \forall\, \xi \in \mathbb{Z}^n.
    \end{equation*}
\end{definition}
It can be proved that the toroidal Fourier transform is a bijection between $C^\infty(\mathbb{T}^n)$ and $\mathcal{S}(\mathbb{Z}^n)$ with inverse  $\mathcal{F}_{\mathbb{T}^n}^{-1}:  \mathcal{S}(\mathbb{Z}^n) \rightarrow C^\infty(\mathbb{T}^n)$ defined by
    \begin{equation*}
        (\mathcal{F}_{\mathbb{T}^n}^{-1}\varphi)(x) := \sum_{\xi \in \mathbb{Z}^n} e^{i2\pi x \cdot \xi} \varphi(\xi).
    \end{equation*}
In the Euclidean case, the Hörmander symbol classes are defined requiring some regularity conditions using partial derivatives in both, the space and frequency domain. However, in the case of the torus the frequency domain is the lattice $\mathbb{Z}^n$. Therefore, we define the natural analogous of the derivative in the discrete case: the difference operator.
\begin{definition}[Partial difference operator]
    Let $e_j \in \mathbb{Z}^n$ such that $(e_j)_i$ is equal to 1 if $i=j$ and 0 otherwise. Then, for $p(\xi):\mathbb{Z}^n \rightarrow \mathbb{C} $ we define 
    \begin{equation*}
        \Delta_{\xi_j}p(\xi) := p(\xi + e_j) - p(\xi) \quad \text{and} \quad \Delta^\alpha_\xi := \Delta_{\xi_1}^{\alpha_1} \cdots \Delta_{\xi_n}^{\alpha_n},
    \end{equation*}
    
    for any multi-index $\alpha \in \mathbb{N}^n_0$
\end{definition}
Now we proceed to define the Hörmander classes of symbols on $ \mathbb{T}^n \times \mathbb{Z}^n $, as in Ruzhansky and Turunen \cite{ruzhansky-turunen}, that will be important in the quantization of pseudo-differential operators.
\begin{definition}[Hörmander classes on $ \mathbb{T}^n \times \mathbb{Z}^n $]
    Let $m \in \mathbb{R} $ and $0 \leq \delta, \rho\leq 1$. We say that a function $p:=p(x, \xi)$ that is smooth on $x$ for any $\xi \in \mathbb{Z}^n$ belongs to the \textit{toroidal symbol class} $S^m_{\rho,\delta} (\mathbb{T}^n\times \mathbb{Z}^n) $ if 

    \begin{equation*}
        \left|\partial^\beta_x \Delta^\alpha_\xi p(x, \xi)\right| \leq C_{\alpha\beta}\langle\xi\rangle^{m-\rho|\alpha| + \delta|\beta|},
    \end{equation*}
    
    for every $x \in \mathbb{T}^n$, $\alpha, \beta \in \mathbb{N}^n_0$ and $\xi \in \mathbb{R}^n$. Moreover, we say that $p(x, \xi)$ has order $m$ and we define $S^{-\infty}_{\rho,\delta} (\mathbb{T}^n\times \mathbb{Z}^n) = \bigcap_{m\in\mathbb{R}}S^m_{\rho,\delta} (\mathbb{T}^n\times \mathbb{Z}^n) $.
\end{definition}

\begin{definition}[Pseudo-differential operators on $ \mathbb{T}^n \times \mathbb{Z}^n $]
    For a symbol $p:=p(x, \xi) \in S^m_{\rho,\delta} (\mathbb{T}^n\times \mathbb{Z}^n) $ we can associate a \textit{toroidal pseudo-differential operator} $\mathrm{Op}(p)$ from $C^\infty(\mathbb{T}^n)$ into itself, defined as  
    \begin{equation*}
        \mathrm{Op}(p)f(x):=\sum_{\xi \in \mathbb{Z}^n} e^{i2\pi x \cdot \xi}p(x, \xi)(\mathcal{F}_{\mathbb{T}^n}f)(\xi),
    \end{equation*}
    which can be rewritten as 
    \begin{equation}
        \mathrm{Op}(p)f(x)=  \sum_{\xi \in \mathbb{Z}^n} \int_{\mathbb{T}^n} e^{i2\pi (x-y) \cdot \xi} p(x, \xi)f(y) \diff y .
        \label{eq:pdo-def}
    \end{equation}
    The class of operators with symbols on $S^m_{\rho,\delta} (\mathbb{T}^n\times \mathbb{Z}^n) $ will be denoted  by $\Psi^m_{\rho,\delta} (\mathbb{T}^n\times \mathbb{Z}^n) $.
\end{definition}
    The toroidal Fourier transform and toroidal pseudo-differential operators can be extended by duality to the space of \textit{periodic distributions} $\mathcal{D}'(\mathbb{T}^n)$ consisting of continuous linear functionals on $C^\infty(\mathbb{T}^n)$. This extension allows us to define the Schwartz kernel of a toroidal pseudo-differential operator.
\begin{remark}[Schwartz kernel]
    In the sense of distributions described above, \cref{eq:pdo-def} can be rewritten as
    \begin{equation*}
         \mathrm{Op}(p)f(x)=\int_{\mathbb{T}^n}\left[\sum_{\xi \in \mathbb{Z}^n} e^{i2\pi(x - y) \cdot \xi} p(x, \xi)\right]f(y)\diff y = \int_{\mathbb{T}^n}k(x, y)f(y)\diff y,
    \end{equation*}
    and we say $k(x, y)$ is the \textit{Schwartz kernel} of the corresponding operator. Notice that when the order of $p(x, \xi)$ is less than $-n$, the series defining $k$ is absolutely convergent and the kernel is a well-defined function.
\end{remark}
Since the proofs of the results in this paper only depend on the order of the toroidal pseudo-differential operators, then they extend to their adjoints. This is justified by the following result by \cite[Corollary~4.9.8]{ruzhansky-turunen}.
\begin{theorem}
    If $T \in \Psi^m_{\rho,\delta} (\mathbb{T}^n \times \mathbb{Z}^n) $, then its adjoint $T^* \in \Psi^m_{\rho,\delta} (\mathbb{T}^n \times \mathbb{Z}^n) $.
\end{theorem}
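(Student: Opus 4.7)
The plan is to realise the adjoint $T^{*}$ as a toroidal \emph{amplitude operator} and then to invoke the amplitude-to-symbol reduction from the discrete calculus of \cite{ruzhansky-turunen} in order to exhibit a representative symbol in $S^{m}_{\rho,\delta}(\mathbb{T}^{n}\times\mathbb{Z}^{n})$.

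First, I would compute $T^{*}$ formally. For $f,g\in C^{\infty}(\mathbb{T}^{n})$, expanding $\langle Tf,g\rangle_{L^{2}(\mathbb{T}^{n})}$ via \cref{eq:pdo-def}, using the absolute convergence guaranteed by the smoothness of $f,g$, and taking complex conjugates yields
\begin{equation*}
T^{*}g(x)=\sum_{\xi\in\mathbb{Z}^{n}}\int_{\mathbb{T}^{n}}e^{i2\pi(x-y)\cdot\xi}\,\overline{p(y,\xi)}\,g(y)\,\diff y.
\end{equation*}
Hence $T^{*}$ is a toroidal amplitude operator with amplitude $a(x,y,\xi):=\overline{p(y,\xi)}$. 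Since $p\in S^{m}_{\rho,\delta}(\mathbb{T}^{n}\times\mathbb{Z}^{n})$, complex conjugation preserves the pointwise size estimates on every $\partial_{y}^{\beta}\Delta_{\xi}^{\alpha}p$, and $a$ does not depend on the first spatial variable; therefore
\begin{equation*}
\bigl|\partial_{x}^{\beta_{1}}\partial_{y}^{\beta_{2}}\Delta_{\xi}^{\alpha}a(x,y,\xi)\bigr|\le C_{\alpha,\beta_{1},\beta_{2}}\langle\xi\rangle^{m-\rho|\alpha|+\delta|\beta_{2}|},
\end{equation*}
so $a$ lies in the toroidal amplitude class of order $m$ with parameters $(\rho,\delta)$.

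Second, I would invoke the amplitude-to-symbol reduction on $\mathbb{T}^{n}\times\mathbb{Z}^{n}$ (the toroidal analogue of the classical Kohn--Nirenberg reduction) developed in \cite[Chapter~4]{ruzhansky-turunen}. Applied to $a$, this yields a symbol $q\in S^{m}_{\rho,\delta}(\mathbb{T}^{n}\times\mathbb{Z}^{n})$ with $\mathrm{Op}(q)=T^{*}$ admitting the asymptotic expansion
\begin{equation*}
q(x,\xi)\sim\sum_{\alpha\in\mathbb{N}^{n}_{0}}\frac{1}{\alpha!}\,\Delta_{\xi}^{\alpha}\partial_{y}^{\alpha}a(x,y,\xi)\Big|_{y=x}=\sum_{\alpha\in\mathbb{N}^{n}_{0}}\frac{1}{\alpha!}\,\Delta_{\xi}^{\alpha}\partial_{x}^{\alpha}\overline{p(x,\xi)}.
\end{equation*}
Each term in this series has order bounded by $m-(\rho-\delta)|\alpha|\le m$, and finite truncations leave a remainder in $S^{m-(\rho-\delta)N}_{\rho,\delta}$. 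Thus the leading term $\overline{p(x,\xi)}\in S^{m}_{\rho,\delta}$ controls the order, and $q\in S^{m}_{\rho,\delta}(\mathbb{T}^{n}\times\mathbb{Z}^{n})$, which is the required conclusion.

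The main obstacle is the amplitude-to-symbol reduction itself in the discrete setting. In the Euclidean case one combines Fourier inversion with integration by parts in $\xi$, whereas on $\mathbb{Z}^{n}$ one must replace integration by parts by summation by parts against the difference operators $\Delta_{\xi}^{\alpha}$, and substitute the ordinary Taylor expansion of $a(x,y,\xi)$ around $y=x$ by its discrete analogue adapted to the toroidal Fourier transform, while carefully estimating the remainders in the symbol norms. This technical core is already carried out in the Ruzhansky--Turunen calculus, so the assertion about $T^{*}$ follows by combining the amplitude representation above with their reduction theorem.
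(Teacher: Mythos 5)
The strategy you adopt --- expressing $T^{*}$ as a toroidal amplitude operator with amplitude $a(x,y,\xi)=\overline{p(y,\xi)}$ and then invoking the Ruzhansky--Turunen amplitude-to-symbol reduction on $\mathbb{T}^{n}\times\mathbb{Z}^{n}$ --- is the standard one and is indeed how \cite[Corollary~4.9.8]{ruzhansky-turunen} is obtained; the paper itself supplies no proof, only this citation. Your derivation of the amplitude formula for $T^{*}$ is also correct.

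The gap is in the regime $\delta>\rho$, which is precisely what the paper is designed to treat (\cref{theo:mainT} allows $0<\rho\le 1$, $0\le\delta<1$, and the introduction emphasises the forbidden range $\rho\le 1/2$, $\delta\ge\rho$). Your assertion that every term $\frac{1}{\alpha!}\Delta_{\xi}^{\alpha}\partial_{x}^{\alpha}\overline{p(x,\xi)}$ has order $m-(\rho-\delta)|\alpha|\le m$ and that finite truncations leave a remainder in $S^{m-(\rho-\delta)N}_{\rho,\delta}$ is true only for $\delta\le\rho$; when $\delta>\rho$ the exponent $m-(\rho-\delta)|\alpha|=m+(\delta-\rho)|\alpha|$ grows with $|\alpha|$, the expansion degrades rather than improves, and the conclusion ``the leading term $\overline{p}$ controls the order'' is unsupported. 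The asymptotic-sum machinery of \cite[Sections~4.7--4.9]{ruzhansky-turunen} is developed under the standing hypothesis $\delta<\rho$, so invoking it wholesale only covers that range; for $\delta\ge\rho$ one must verify directly --- e.g.\ by oscillatory-integral estimates on the reduced symbol $q(x,\xi)=\sum_{\eta\in\mathbb{Z}^{n}}\int_{\mathbb{T}^{n}}e^{-i2\pi z\cdot\eta}\,\overline{p(x-z,\xi-\eta)}\,\diff z$, splitting the $\eta$-sum near and away from $\xi$ and using repeated summation by parts in $\eta$ together with the decay in $z$ --- that the reduction still lands in $S^{m}_{\rho,\delta}$. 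This cannot be extracted from the asymptotic expansion, and your proof as written does not supply it.
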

Now we introduce a particular and very useful toroidal pseudo-differential operator.
\begin{definition}[Bessel's potential]
    We define \textit{Bessel's potential} $J^s$ as the pseudo-differential operator with symbol $\langle\xi\rangle^s$ for $s \in \mathbb{R}$.
\end{definition}

For our pourposes, we will use a smooth interpolation of a symbol on $\mathbb{T}^n \times \mathbb{Z}^n $ to obtain a symbol defined on $\mathbb{T}^n \times \mathbb{R}^n $. This is possible in view of the results stated in \cite[section~4.5]{ruzhansky-turunen}, here we write some useful consequences.

\begin{theorem}
    Let $m \in \mathbb{R}$, $0\leq\delta<1$, $0< \rho \leq 1$. The symbol $p \in S^m_{\rho,\delta} (\mathbb{T}^n\times \mathbb{Z}^n) $ is a toroidal symbol if and only if there exists a symbol $\tilde{p} \in S^m_{\rho,\delta} (\mathbb{T}^n\times \mathbb{R}^n) $ such that $p = \tilde{p}|_{\mathbb{T}^n\times \mathbb{Z}^n} $. Moreover, this extension is unique modulo $S^{-\infty}(\mathbb{T}^n\times \mathbb{R}^n)$.
\end{theorem}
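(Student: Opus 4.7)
The plan is to prove the restriction direction, the extension direction, and the uniqueness statement separately. The restriction direction ($\Leftarrow$) is routine: iterating the fundamental theorem of calculus gives $\Delta^\alpha_\xi\tilde p(x,k)=\int_{[0,1]^{|\alpha|}}\partial^\alpha_\xi\tilde p(x,k+t)\,\diff t$, and since $\langle k+t\rangle\sim\langle k\rangle$ for $t\in[0,1]^{|\alpha|}$, the Euclidean Hörmander estimates on $\tilde p$ transfer directly to the toroidal bounds $|\partial^\beta_x\Delta^\alpha_\xi p(x,k)|\lesssim\langle k\rangle^{m-\rho|\alpha|+\delta|\beta|}$.

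For the extension direction ($\Rightarrow$) I would build a universal interpolation operator. Fix a smooth rapidly decreasing kernel $\psi\in\mathcal{S}(\mathbb{R}^n)$ with the nodal property $\psi(k)=\delta_{k,0}$ for every $k\in\mathbb{Z}^n$ (a standard Paley--Wiener-type construction, e.g.\ a product of sinc factors multiplied by a Schwartz bump equal to $1$ at the origin), and define
\[
\tilde p(x,\xi):=\sum_{\eta\in\mathbb{Z}^n}p(x,\eta)\,\psi(\xi-\eta),\qquad(x,\xi)\in\mathbb{T}^n\times\mathbb{R}^n.
\]
The nodal property immediately yields $\tilde p|_{\mathbb{T}^n\times\mathbb{Z}^n}=p$. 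To control the Euclidean derivatives I would transfer each $\partial_{\xi_j}$ off $\psi$ via discrete summation by parts (Abel's transformation), rewriting $\partial^\beta_x\partial^\alpha_\xi\tilde p$ as a finite combination of sums of the form $\sum_\eta\partial^\beta_x\Delta^{\alpha'}_\eta p(x,\eta)\,\psi_{\alpha'}(\xi-\eta)$ with each $\psi_{\alpha'}$ Schwartz and $|\alpha'|=|\alpha|$. Combining the toroidal bound $|\partial^\beta_x\Delta^{\alpha'}_\eta p(x,\eta)|\lesssim\langle\eta\rangle^{m-\rho|\alpha'|+\delta|\beta|}$ with the convolution-type estimate $\sum_\eta\langle\eta\rangle^{s}|\psi_{\alpha'}(\xi-\eta)|\lesssim\langle\xi\rangle^{s}$ (obtained by splitting at $|\eta-\xi|\le\langle\xi\rangle^{1/2}$ and using the Schwartz decay of $\psi_{\alpha'}$ on the far region) then yields the required Euclidean Hörmander estimate on $\tilde p$.

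For uniqueness, if $\tilde p_1,\tilde p_2\in S^m_{\rho,\delta}$ both restrict to $p$ and $q:=\tilde p_1-\tilde p_2$, I would show by induction on $N\in\mathbb{N}_0$ that $q\in S^{m-N\rho}_{\rho,\delta}$. Since $q$ vanishes on $\mathbb{T}^n\times\mathbb{Z}^n$, so does $\partial^\beta_x q$, hence $\int_{[0,1]^{|\alpha|}}\partial^\alpha_\xi\partial^\beta_x q(x,k+t)\,\diff t=\Delta^\alpha_\eta(\partial^\beta_x q)(x,k)=0$ for all multi-indices $\alpha,\beta$ and every lattice point $k$. Continuity then forces $\partial^\alpha_\xi\partial^\beta_x q$ to vanish at some $\xi^*$ with $|\xi^*-k|\le\sqrt{n}$; the mean value inequality on the segment from $\xi^*$ to any $\xi$ within distance $\sqrt{n}$ of $k$, together with the inductive hypothesis bound on $\nabla_\xi\partial^\alpha_\xi\partial^\beta_x q$, upgrades the estimate to the $S^{m-(N+1)\rho}_{\rho,\delta}$ bound. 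Iterating in $N$ and using $\rho>0$ gives $q\in\bigcap_N S^{m-N\rho}_{\rho,\delta}=S^{-\infty}$. The main obstacle I anticipate is the summation-by-parts bookkeeping in the extension step: obtaining exactly the weight $\langle\xi\rangle^{-\rho|\alpha|}$ requires distributing the $|\alpha|$ derivatives onto the discrete variable $\eta$ cleanly and then controlling the tail of the sum uniformly in $\xi$ via the Schwartz decay of $\psi$.
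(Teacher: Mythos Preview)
The paper does not actually prove this statement; it is quoted without proof in the preliminaries and attributed to \cite[Section~4.5]{ruzhansky-turunen}. Your outline is essentially the standard argument found there: the extension is built by discrete convolution with a Schwartz kernel $\psi$ satisfying $\psi|_{\mathbb{Z}^n}=\delta_0$ and $\widehat\psi\in C_c^\infty$ (precisely the Paley--Wiener choice that makes the Abel summation-by-parts transfer of $\partial_\xi$ to $\Delta_\eta$ go through), the restriction direction uses the iterated fundamental theorem of calculus, and uniqueness is an inductive $\rho$-gain via the mean-value theorem.

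Two minor points to tighten in the uniqueness step. First, the vanishing-integral argument only produces a zero of the integrand for \emph{real}-valued functions, so for complex symbols run the induction on $\operatorname{Re}q$ and $\operatorname{Im}q$ separately. Second, the zero $\xi^*$ lies in the box $k+\prod_j[0,\alpha_j]$ rather than within $\sqrt{n}$ of $k$; since this diameter is an $\alpha$-dependent constant, the mean-value estimate against $\nabla_\xi\partial^\alpha_\xi\partial^\beta_x q$ still closes with the same gain.
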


\begin{remark}
    When we employ this extension, the definition of its corresponding operator may be adjusted to 
    \begin{equation*}
        \mathrm{Op}(\tilde{p})f(x) = \int_{\mathbb{T}^n} \left[ \int_{\mathbb{R}^n} e^{i2\pi (x-y) \cdot \xi} \tilde{p}(x, \xi)\diff \xi \right] f(y) \diff y ,
    \end{equation*}
    so the Schwartz kernel of the operator, in the sense of distributions, is defined as the integral 
    \begin{equation*}
        \tilde{k}(x, y) = \int_{\mathbb{R}^n} e^{i2\pi (x-y) \cdot \xi} \tilde{p}(x, \xi)\diff \xi.
    \end{equation*}
    This kernel representation allows us to use techniques such as integration by parts when discussing properties of the Schwartz kernel.
\end{remark}
Hence, there is a correspondence between toroidal symbols with discrete and continuous frequency domains. This translates to the corresponding operators as well.
\begin{theorem}[Equivalence of operator classes]
    Let $m \in \mathbb{R}$, $0\leq\delta<1$, $0< \rho \leq 1$. Then 
        \begin{equation*}
            \Psi^m_{\rho,\delta}(\mathbb{T}^n\times \mathbb{R}^n) = \Psi^m_{\rho,\delta}(\mathbb{T}^n\times \mathbb{Z}^n).
        \end{equation*}
\end{theorem}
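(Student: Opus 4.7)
The plan is to leverage the immediately preceding theorem, which identifies toroidal and continuous symbols modulo $S^{-\infty}$, and to reduce the equality of operator classes to showing that the two operator formulas (sum over $\mathbb{Z}^n$ versus integral over $\mathbb{R}^n$ in the frequency variable) give the same operator up to a smoothing remainder. The essential tool is the Poisson summation formula applied in the frequency variable, plus standard decay estimates on oscillatory integrals.

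\textbf{Step 1 (toroidal into continuous).} Given $p \in S^m_{\rho,\delta}(\mathbb{T}^n \times \mathbb{Z}^n)$, use the previous theorem to produce a smooth extension $\tilde{p} \in S^m_{\rho,\delta}(\mathbb{T}^n \times \mathbb{R}^n)$ with $\tilde{p}|_{\mathbb{T}^n \times \mathbb{Z}^n} = p$. Applying the Poisson summation formula in $\xi$ to the function $\xi \mapsto e^{i2\pi(x-y)\cdot \xi}\tilde{p}(x,\xi)$ yields, at the level of Schwartz kernels,
\begin{equation*}
k(x,y) = \sum_{\xi \in \mathbb{Z}^n} e^{i2\pi (x-y)\cdot \xi} p(x,\xi) = \sum_{l \in \mathbb{Z}^n} \tilde{k}(x, y+l),
\end{equation*}
where $\tilde{k}(x,y) = \int_{\mathbb{R}^n} e^{i2\pi(x-y)\cdot \xi}\tilde{p}(x,\xi)\diff \xi$. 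Using the periodicity of $f \in C^\infty(\mathbb{T}^n)$ and unfolding the integral over $\mathbb{T}^n$ against this periodised kernel, one obtains $\mathrm{Op}(p) = \mathrm{Op}(\tilde{p}) + R$, where $Rf(x) = \sum_{l \neq 0}\int_{\mathbb{T}^n} \tilde{k}(x, y+l) f(y)\diff y$.

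\textbf{Step 2 (the remainder is smoothing).} For each $l \neq 0$ and $(x,y) \in \mathbb{T}^n \times \mathbb{T}^n$, the point $y+l$ stays uniformly away from the diagonal in $\mathbb{R}^n$, so integration by parts in $\xi$ using the identity $(1+4\pi^2|x-y-l|^2)^{-N}(1-\Delta_\xi)^N e^{i2\pi(x-y-l)\cdot \xi} = e^{i2\pi(x-y-l)\cdot \xi}$ produces arbitrarily fast decay in $|l|$ together with smoothness in $(x,y)$. Summing the resulting estimates gives $R \in \Psi^{-\infty}(\mathbb{T}^n\times\mathbb{Z}^n) \cap \Psi^{-\infty}(\mathbb{T}^n\times\mathbb{R}^n)$. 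Therefore $\mathrm{Op}(p) = \mathrm{Op}(\tilde{p}+\tilde{r})$ with $\tilde{r} \in S^{-\infty}(\mathbb{T}^n\times\mathbb{R}^n)$, which gives the inclusion $\Psi^m_{\rho,\delta}(\mathbb{T}^n\times\mathbb{Z}^n)\subseteq \Psi^m_{\rho,\delta}(\mathbb{T}^n\times\mathbb{R}^n)$.

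\textbf{Step 3 (continuous into toroidal).} For the reverse inclusion, start from $\tilde{p}\in S^m_{\rho,\delta}(\mathbb{T}^n\times\mathbb{R}^n)$ and put $p = \tilde{p}|_{\mathbb{T}^n\times\mathbb{Z}^n}$. The finite-difference estimate
\begin{equation*}
|\partial_x^\beta \Delta_\xi^\alpha p(x,\xi)| \lesssim \sup_{\eta \in \xi +[0,1]^n}|\partial_x^\beta\partial_\xi^\alpha \tilde{p}(x,\eta)| \lesssim \langle \xi\rangle^{m-\rho|\alpha|+\delta|\beta|}
\end{equation*}
(obtained from the mean value theorem, using $\rho \leq 1$) shows $p\in S^m_{\rho,\delta}(\mathbb{T}^n\times\mathbb{Z}^n)$. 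The identity of Step 1 then reads $\mathrm{Op}(\tilde{p}) = \mathrm{Op}(p) - R$, and since $R \in \Psi^{-\infty}\subseteq \Psi^m_{\rho,\delta}(\mathbb{T}^n\times\mathbb{Z}^n)$, we conclude $\mathrm{Op}(\tilde{p}) \in \Psi^m_{\rho,\delta}(\mathbb{T}^n\times\mathbb{Z}^n)$.

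The main obstacle is the rigorous application of Poisson summation in Step 1 when $m$ is not sufficiently negative, because the defining series for $k(x,y)$ then converges only distributionally. I would handle this by composing with the Bessel potential $J^{-N}$ of large enough order to shift the symbol into $S^{m-N}_{\rho,\delta}$, carrying out the kernel identity in the absolutely convergent regime, and then transferring the identity back using the linearity of $\mathrm{Op}$ and the symbolic calculus. The rest of the argument then reduces to the standard stationary-phase/decay estimates for oscillatory integrals.
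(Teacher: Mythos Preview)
The paper does not actually prove this theorem: it is stated in the Preliminaries section as a consequence of the results in \cite[Section~4.5]{ruzhansky-turunen} and is used without further justification. So there is no in-paper proof to compare against.

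That said, your outline is essentially the standard argument from the Ruzhansky--Turunen theory that the paper is citing. The Poisson summation identity relating the periodised kernel $k(x,y)$ to $\sum_{l}\tilde k(x,y+l)$, the observation that the $l\neq 0$ terms are uniformly off-diagonal and hence smoothing after integration by parts in $\xi$, and the mean-value estimate for passing from $\partial_\xi^\alpha$-bounds to $\Delta_\xi^\alpha$-bounds on the restriction, are exactly the ingredients used there. Your proposed fix for the convergence issue (reduce to large negative order via $J^{-N}$ and then undo) is workable, though one has to be slightly careful that $J^{-N}$ is a priori defined via the $\mathbb{Z}^n$-quantisation, so ``transferring back'' already presupposes some compatibility; the cleaner route in the literature is to argue directly at the level of tempered distributions, or to insert a smooth cutoff $\chi(\xi/R)$ and pass to the limit. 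Either way, there is no genuine gap in your plan.
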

Now, we define some function spaces on the torus taking values on a Banach space $X$, used to prove our boudedness results.

\begin{definition}[Bochner-Lebesgue spaces]
    \cite[pp.~9]{defrancia} For a Banach space $X$ denote $L^p(\mathbb{T}^n;X)$, the \textit{Bochner-Lebesgue space}, consisting of strongly measureable $X$-valued functions $f$ defined on $\mathbb{T}^n$ such that 
    \begin{align*}
        \|f\|_{L^p(\mathbb{T}^n;X)} := \left( \int_{\mathbb{T}^n} \|f(x)\|_X^p \diff x \right)^{1/p} < \infty \; &, \quad 1\leq p < \infty,\\ 
        \|f\|_{L^\infty(\mathbb{T}^n;X)} := \esssup_{x \in \mathbb{T}^n} \|f(x)\|_X  < \infty. &
    \end{align*}
    Similarly, we can define weak-$L^p(\mathbb{T}^n;X)$ as those functions satisfying
    \begin{equation*}
        \|f\|_{weak-L^p(\mathbb{T}^n;X)} := \sup_{\lambda > 0} \lambda|\{x \in \mathbb{T}^n : \|f(x)\|_X > \lambda \}|^{1/p} < \infty.
    \end{equation*}
    When $X = \mathbb{C}$, we simply refer to these spaces as $L^p(\mathbb{T}^n).$ By simplifying the notation, we also use $\|\cdot\|_p$ for the $L^p$-norm.
    \label{def:bochner-lebesgue}
\end{definition}
We recall the definition of bounded linear operators between Banach spaces in order to define operators with operator valued kernel.
\begin{definition}[Bounded operators between Banach spaces $\mathcal{B}(X,Y)$]
    We say $T:X\rightarrow Y$ is a bounded linear operator between Banach spaces, namely $T \in \mathcal{B}(X, Y)$, if 
    \begin{equation*}
        \|Tx\|_Y \leq C\|x\|_X  \quad \text{for every } x\in X.
    \end{equation*}
    Also, the norm is defined as 
    \begin{equation*}
        \|T\|_{\mathcal{B}(X,Y)} := \inf \left\{ C \in \mathbb{R}^+ :  \|Tx\|_Y \leq C\|x\|_X \quad \text{for every } x\in X \right\}.
    \end{equation*}
\end{definition}
In order to state a theorem analogous to the Euclidean case proved in \cite[Theorem~4.1]{alvarez-milman}, we define linear operators with operator valued kernel.
\begin{definition}[Operator valued kernel]
\cite[pp.~29]{defrancia} We say that an operator $T:C^\infty(\mathbb{T}^n;X)\rightarrow C^\infty(\mathbb{T}^n;Y)$ has an operator valued kernel if it can be written as 
\begin{equation*}
    Tf(x) = \int_{\mathbb{T}^n}k(x, y)f(y)\diff y,
\end{equation*}
 where $k:\mathbb{T}^n \times \mathbb{T}^n \rightarrow \mathcal{B}(X,Y)$, called the kernel, is such that $\|k(x, \cdot)\|_{\mathcal{B}(X,Y)}$ is integrable away of $x \in \mathbb{T}^n $.
    \label{def:operator-kernel}
\end{definition}
It has been shown that toroidal pseudo-differential operators in the Hörmander class of order zero are not bounded on $L^p(\mathbb{R}^n)$, in general for $p=1,\infty$, see e.g. the Ph.D. thesis of Wang \cite{wang}. This counterexample can be extended to the torus. Hence, it is required to define adequate subspaces from which we can interpolate to $L^p(\mathbb{T}^n)$ spaces for $1<p<\infty$. Those spaces happen to be the Hardy and BMO spaces, see \cite[pp.~159]{fefferman-stein}.
\begin{definition}[Hardy space $H^1(\mathbb{T}^n)$]
    We say that $a$ is an $H^1(\mathbb{T}^n)$\textit{-atom} if $a$ is supported on a ball $B \subset \mathbb{T}^n$ so that 
    \begin{equation*}
        \|a\|_{L^\infty(\mathbb{T}^n)} \leq |B|^{-1} \quad \text{ and } \quad \int_Ba(x)\diff x = 0.
    \end{equation*}
    Also, we say that $f \in H^1(\mathbb{T}^n)$ if it can be written as  
    \begin{equation*}
        f = \sum_{j \in \mathbb{Z}^+}\lambda_ja_j \;\; \text{ where } \;\; \sum_{j \in \mathbb{Z}^+}|\lambda_j| < \infty,
    \end{equation*}
    and $a_j$ are $H^1(\mathbb{T}^n)$-atoms. An expression of this form is called an \textit{atomic decomposition of }$f$. Also, we define the $H^1(\mathbb{T}^n)$\textit{-norm} as 
    \begin{equation*}
        \|f\|_{H^1(\mathbb{T}^n)} := \inf \sum_{j \in \mathbb{Z}^+}|\lambda_j|,
    \end{equation*}
    where the infimum is taken over all atomic decompositions of $f$. 
\end{definition}

\begin{definition}[Bounded Mean Oscillation space $\mathrm{BMO}(\mathbb{T}^n)$]
    We say $f \in \mathrm{BMO}(\mathbb{T}^n)$, namely, $f$ has \textit{bounded mean oscillation}, if 
    \begin{equation*}
        \|f\|_{\mathrm{BMO}(\mathbb{T}^n)}' := \sup_{B\subset\mathbb{T}^n} \frac{1}{|B|} \int_B \left| f(x) - f_B \right| \diff x < \infty, \quad \mathrm{ where } \quad f_B 
        := \frac{1}{|B|}\int_B f(x)\diff x,
    \end{equation*}
    and $B$ are balls contained on $\mathbb{T}^n$. Here we will use an equivalent norm to $\|\cdot\|_{\mathrm{BMO}(\mathbb{T}^n)}'$ defined as 
    \begin{equation*}
        \|f\|_{\mathrm{BMO}(\mathbb{T}^n)} := \sup_{B\subset\mathbb{T}^n} \inf_{b\in \mathbb{C}} \frac{1}{|B|} \int_B \left| f(x) - b \right| \diff x.
    \end{equation*}
\end{definition}

In \cite{fefferman-BMO}, Charles Fefferman proved the famous result that $\mathrm{BMO}(\mathbb{R}^n)$ is the dual space of the Hardy space $H^1(\mathbb{R}^n)$. This result was extended to homogeneous groups, particularly the torus, see Folland and Stein \cite{folland-stein}. This duality can be understood on the following sense:
\begin{itemize}
    \item[(a)] For any $\phi \in \mathrm{BMO}(\mathbb{T}^n)$ we can define the functional $f\mapsto \int_{\mathbb{T}^n}f(x)\phi(x)\diff x$ which extends to a bounded functional on $H^1(\mathbb{T}^n)$.
    \item[(b)] Conversely, any continuous functional on $H^1(\mathbb{T}^n)$ can be identified with a functional defined as in (a) for a unique function $\phi \in \mathrm{BMO}(\mathbb{T}^n)$.
\end{itemize}

Now, we state a theorem that will be useful in proving some estimates in the next section. Notice this result by Hounie \cite{hounie} is valid for Euclidean pseudo-differential operators. The toroidal analogous will be stated in the next section. 
\begin{theorem}
    Let $\tilde{p}:\mathbb{R}^n \times \mathbb{R}^n \rightarrow \mathbb{C}$ be a symbol such that for $0<\rho\leq1$, $0\leq \delta < 1$, $m\leq -n \lambda = -n\max\{0, (\delta - \rho)/2\}$ and $|\alpha|,|\beta| \leq \lceil n/2\rceil$ satisfies: 
    \begin{equation}
        \left|\partial^\alpha_\xi \partial^\beta_x \tilde{p}(x, \xi)\right| \leq C_{\alpha\beta}\langle\xi\rangle^{m-\rho|\alpha| + \delta|\beta|}
    \end{equation}

    Then $\mathrm{Op}(\tilde{p})$ is bounded from $L^2(\mathbb{R}^n)$ into $L^2(\mathbb{R}^n)$ with norm proportional to the best bound $C_{\alpha\beta}$.
    \label{theo:22boundR}
\end{theorem}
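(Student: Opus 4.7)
The plan is to follow the classical approach of Hounie \cite{hounie}, combining a Littlewood--Paley dyadic decomposition in the frequency variable $\xi$ with an almost-orthogonality argument of Cotlar--Stein type. Fix a smooth partition of unity $\{\psi_{j}\}_{j\geq 0}$ on $\mathbb{R}^{n}$ with $\psi_{0}\in C_{c}^{\infty}$ supported near the origin and each $\psi_{j}$, $j\geq 1$, supported in the dyadic annulus $\{2^{j-1}\leq|\xi|\leq 2^{j+1}\}$. Decomposing $\tilde{p}=\sum_{j\geq 0}p_{j}$ with $p_{j}(x,\xi)=\tilde{p}(x,\xi)\psi_{j}(\xi)$, we have $T=\sum_{j\geq 0}T_{j}$ with $T_{j}=\mathrm{Op}(p_{j})$. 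The problem reduces to establishing a uniform bound on $\|T_{j}\|_{L^{2}\to L^{2}}$ together with sufficiently rapid decay of $\|T_{j}^{*}T_{k}\|_{L^{2}\to L^{2}}$ and $\|T_{j}T_{k}^{*}\|_{L^{2}\to L^{2}}$ as $|j-k|\to\infty$.

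For the diagonal bound, the kernel
\begin{equation*}
K_{j}(x,y)=\int_{\mathbb{R}^{n}}e^{2\pi i(x-y)\cdot\xi}p_{j}(x,\xi)\,d\xi
\end{equation*}
is estimated by integration by parts in $\xi$ exactly $\lceil n/2\rceil$ times, yielding
\begin{equation*}
|K_{j}(x,y)|\leq C\, 2^{j(m+n)}\bigl(1+2^{j\rho}|x-y|\bigr)^{-\lceil n/2\rceil},
\end{equation*}
with analogous bounds for the $x$-derivatives $\partial_{x}^{\beta}K_{j}$, $|\beta|\leq\lceil n/2\rceil$, at the cost of an additional factor $2^{j\delta|\beta|}$. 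A Plancherel-based argument, which computes $\widehat{T_{j}f}$ and exploits the $\xi$-localization of $p_{j}$, together with Cauchy--Schwarz in the kernel representation, yields an $L^{2}$-bound of the form $\|T_{j}\|_{L^{2}\to L^{2}}\leq C\, 2^{j(m+n\lambda)}$, which is uniformly bounded in $j$ precisely under the hypothesis $m\leq -n\lambda$.

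For the off-diagonal control, the approximate orthogonality is built into the decomposition: for $|j-k|\geq 2$ the symbols $p_{j}$ and $p_{k}$ have disjoint $\xi$-supports. Analysing the kernels of the composed operators $T_{j}^{*}T_{k}$ and $T_{j}T_{k}^{*}$ and integrating by parts in the phase variable forced by this disjointness, using the $x$-regularity of $\tilde{p}$ up to order $\lceil n/2\rceil$, produces estimates of the form
\begin{equation*}
\|T_{j}^{*}T_{k}\|_{L^{2}\to L^{2}}+\|T_{j}T_{k}^{*}\|_{L^{2}\to L^{2}}\leq C\,2^{-\varepsilon|j-k|}
\end{equation*}
for some $\varepsilon>0$. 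The Cotlar--Stein lemma then assembles all pieces into the claimed $L^{2}$-bound, with operator norm depending linearly on finitely many of the constants $C_{\alpha\beta}$. The main technical difficulty lies in this off-diagonal step: the asymptotic composition produces losses of $2^{j\delta}$ per $x$-derivative that must be absorbed by the $2^{-j\rho}$ gains obtained from $\xi$-derivatives, and balancing these competing effects with only $\lceil n/2\rceil$ derivatives available in each variable is exactly what forces the threshold $\lambda=\max\{0,(\delta-\rho)/2\}$ in the order hypothesis.
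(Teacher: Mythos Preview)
The paper does not give its own proof of this statement: Theorem~\ref{theo:22boundR} is quoted from Hounie~\cite{hounie} as a known Euclidean result, and the later toroidal analogue (Theorem~\ref{theo:22boundT}) is likewise disposed of by a remark that ``the proof of the analogous case in $L^2(\mathbb{T}^n)$ is a faithful reproduction'' of~\cite{hounie}. So there is nothing in the paper to compare against beyond the pointer to Hounie.

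Your sketch follows the correct architecture---dyadic Littlewood--Paley decomposition plus Cotlar--Stein almost-orthogonality---which is indeed the skeleton of Hounie's argument. One point deserves care: the sentence ``for $|j-k|\geq 2$ the symbols $p_j$ and $p_k$ have disjoint $\xi$-supports \ldots\ integrating by parts in the phase variable forced by this disjointness'' is misleading as written. Disjoint $\xi$-supports would give $T_j^*T_k=0$ outright for \emph{multipliers}, but with $x$-dependent symbols the composed kernel involves an integral in the spatial variable $z$, and the decay comes from integrating by parts in $z$ against the oscillatory factor $e^{2\pi i z\cdot(\eta-\xi)}$, trading each $\partial_z$ on the symbols (costing $2^{\max(j,k)\delta}$) for a gain of $|\eta-\xi|^{-1}\lesssim 2^{-\max(j,k)}$. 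It is this spatial integration by parts, not a ``phase variable'' one, that produces the off-diagonal decay, and balancing those losses and gains with only $\lceil n/2\rceil$ derivatives is exactly where the threshold $m\leq -n\lambda$ enters, as you note. With that correction the outline is sound; filling in the estimates at the claimed regularity level is the real work, for which the reference~\cite{hounie} remains the source.
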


\section{Main results}
\label{section:main}
In this section we prove the main results of this paper. These are presented into two subsections.
In the first Subsection \ref{section:kernel-estimates} we prove analogous estimates to those proved in \cite[Sections~1 and 2]{alvarez-hounie} for Hörmander classes of toroidal pseudo-differential operators. In the second Subsection \ref{section:boundedness} we prove the toroidal analogous of the $L^1$-weak-$L^1$ boundedness result by \'Alvarez and Milman \cite[Theorem 4.1]{alvarez-milman} for a certain class of operators with operator valued kernel, in order to obtain  $L^1(\mathbb{T}^n)$-weak-$L^1(\mathbb{T}^n)$ boundedness criteria for toroidal pseudo-differential operators. Moreover, we prove $H^1(\mathbb{T}^n)$-$L^1(\mathbb{T}^n)$ and $L^\infty(\mathbb{T}^n)$-$\mathrm{BMO}(\mathbb{T}^n)$ boundedness criteria for toroidal pseudo-differential operators. Then we proceed with the Fefferman-Stein complex interpolation argument to obtain the $L^p(\mathbb{T}^n)$ boundedness for these classes of operators.

\subsection{Kernel estimates on the torus}
\label{section:kernel-estimates}
Here we prove useful estimates for the Schwartz kernel of toroidal pseudo-differential operators. The following theorem is the toroidal analogous of the one proved in \cite[Theorem~1.1]{alvarez-hounie}.
\begin{theorem}
    Let $T \in \Psi^m_{\rho, \delta}(\mathbb{T}^n \times \mathbb{Z}^n) $, $0 < \rho \leq 1$, $0 \leq \delta < 1$, with symbol $p(x, \xi)$ and with kernel

    \begin{equation}
        k(x, y) = \sum_{\xi \in \mathbb{Z}^n} e^{i2\pi(x - y) \cdot \xi} p(x, \xi).
    \end{equation}
    \begin{enumerate}
        \item[(a)] (Pseudo-local property) $k$ is smooth outside the diagonal. Moreover, given $\alpha, \beta \in \mathbb{N}^n_0$, then for any $N > (m + n + |\alpha + \beta|)/\rho$ we have
        \begin{equation}
            \sup_{x \neq y} |x - y|^N |\partial^\alpha_x \partial^\beta_y k(x, y)| = C_{\alpha\beta N} < \infty.
            \label{eq:teo3-1a}
        \end{equation}

        \item[(b)] Assuming  $p$ has compact support in $\xi$ uniformly in $x$, then $k$ is smooth and given $\alpha, \beta \in \mathbb{N}^n_0$, there is $C>0$ such that 
        \begin{equation}
            |\partial^\alpha_x \partial^\beta_y k(x, y)| \leq C\langle x - y \rangle^{-N}.
        \end{equation}

        \item[(c)] Assuming $m + M + n < 0$ for some $M \in \mathbb{Z}^+$, then $k$ is a bounded continuous function with bounded continuous derivatives up to order $M$.\\

        \item[(d)] Assuming $m + M + n = 0$ for some $M \in \mathbb{Z}^+$, then there is $ C >0$ such that 
        \begin{equation}
            \sup_{|\alpha + \beta| = M} |\partial^\alpha_x \partial^\beta_y k(x, y)| \leq C | \log |x-y| | \; , \quad x \neq y.
        \end{equation} 
    \end{enumerate}
\end{theorem}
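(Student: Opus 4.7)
The plan is to reduce to an Euclidean oscillatory integral representation via the extension theorem for toroidal symbols, establish the required estimates on $\mathbb{R}^n$ by a dyadic decomposition combined with integration by parts in $\xi$, and transfer the results to the torus by Poisson summation. Concretely, by the equivalence of operator classes quoted above, I take an extension $\tilde p\in S^m_{\rho,\delta}(\mathbb{T}^n\times\mathbb{R}^n)$ of $p$ and set
\[
\tilde k(x,z):=\int_{\mathbb{R}^n}e^{i2\pi(x-z)\cdot\xi}\tilde p(x,\xi)\diff\xi
\]
as an oscillatory integral. Poisson summation applied, for fixed $x$, to $\xi\mapsto e^{i2\pi(x-y)\cdot\xi}\tilde p(x,\xi)$ yields
\[
k(x,y)=\sum_{\ell\in\mathbb{Z}^n}\tilde k(x,y+\ell),
\]
so every assertion of the theorem reduces to the analogous Euclidean estimate for $\tilde k$ together with rapid decay of the shifts in $\ell$.

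For the core estimate in (a), fix $\alpha,\beta$ and introduce a Littlewood-Paley decomposition $\tilde p=\sum_{j\geq 0}\tilde p_j$ with $\tilde p_j$ supported in $|\xi|\sim 2^j$, and write $\tilde k=\sum_j\tilde k_j$. The trivial bound, obtained by estimating the integrand pointwise and multiplying by the annulus volume $2^{jn}$ (noting that $\partial^\alpha_x\partial^\beta_z$ brings out at most $\langle\xi\rangle^{|\alpha+\beta|}$ since $\delta<1$), gives
\[
\bigl|\partial^\alpha_x\partial^\beta_z\tilde k_j(x,z)\bigr|\leq C\,2^{j(m+n+|\alpha+\beta|)}.
\]
After $N$ integrations by parts in $\xi$, using $e^{i2\pi(x-z)\cdot\xi}=(2\pi|x-z|)^{-2N}(-\Delta_\xi)^N e^{i2\pi(x-z)\cdot\xi}$ and the $\xi$-regularity $|\Delta_\xi^N\tilde p_j|\leq C 2^{j(m-2\rho N)}$ inherent in $S^m_{\rho,\delta}$, one gets
\[
\bigl|\partial^\alpha_x\partial^\beta_z\tilde k_j(x,z)\bigr|\leq C\,|x-z|^{-2N}\,2^{j(m+n+|\alpha+\beta|-2\rho N)}.
\]
Splitting the sum in $j$ at a threshold $2^{j_0}\sim|x-z|^{-1}$, applying the first bound for $j\leq j_0$ and the second for $j>j_0$, produces $|\partial^\alpha_x\partial^\beta_z\tilde k(x,z)|\leq C|x-z|^{-N}$ for every $N>(m+n+|\alpha+\beta|)/\rho$. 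Applying this same estimate with some $N>n$ shows $\tilde k(x,y+\ell)$ is rapidly decreasing in $\ell$, so the periodization is absolutely convergent and the bound passes to $\mathbb{T}^n$, completing (a).

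Parts (b)--(d) follow by variants of the same scheme. For (b), compact $\xi$-support of $\tilde p$ permits arbitrarily many integrations by parts in $\xi$ without any shrinkage of the support, yielding decay of $\tilde k$ of every order, and periodization again transfers the bound. For (c), the hypothesis $m+M+n<0$ makes $\langle\xi\rangle^{m+|\alpha+\beta|}$ integrable on $\mathbb{R}^n$ for $|\alpha+\beta|\leq M$, so each $\partial^\alpha_x\partial^\beta_z\tilde k$ is given by an absolutely convergent integral and is therefore a bounded continuous function, a property preserved by periodization. For (d), with $m+M+n=0$, I split the integral at $|\xi|\sim|x-z|^{-1}$: the low-frequency piece is bounded by $\int_{|\xi|\leq|x-z|^{-1}}\langle\xi\rangle^{-n}\diff\xi\sim\log|x-z|^{-1}$, while the high-frequency piece is controlled by sufficiently many integrations by parts.

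The main technical obstacle will be the bookkeeping in the dyadic argument for (a): one must choose the threshold $j_0$ and the number of integrations by parts $N$ with enough care to produce the \emph{sharp} exponent $N>(m+n+|\alpha+\beta|)/\rho$ rather than a weaker one, and to handle the borderline case where both geometric series marginally converge. A secondary point is verifying that the periodization step $\tilde k\mapsto k=\sum_\ell\tilde k(\cdot,\cdot+\ell)$ neither spoils the singularity at the diagonal nor introduces new ones elsewhere on $\mathbb{T}^n\times\mathbb{T}^n$; this reduces to the absolute convergence of the $\ell$-sum, which is itself provided by the Euclidean estimate applied with $N>n$.
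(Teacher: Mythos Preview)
Your approach is correct but noticeably heavier than the paper's. The paper invokes the equivalence of operator classes to write directly
\[
k(x,y)=\int_{\mathbb{R}^n}e^{i2\pi(x-y)\cdot\xi}\tilde p(x,\xi)\diff\xi
\]
as the toroidal kernel itself, so no explicit periodization via Poisson summation is needed; the $\ell$-sum you introduce is already absorbed into the equality $\mathrm{Op}(p)=\mathrm{Op}(\tilde p)$. For (a) the paper then performs a \emph{single} multi-index integration by parts, $(x-y)^\gamma k(x,y)=\pm\int e^{i2\pi(x-y)\cdot\xi}\partial_\xi^\gamma\tilde p(x,\xi)\diff\xi$, and bounds by $\int\langle\xi\rangle^{m-\rho N}\diff\xi<\infty$ when $N=|\gamma|>(m+n)/\rho$; no Littlewood--Paley decomposition or threshold splitting is used. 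Your dyadic route reaches the same conclusion, but the specific threshold $2^{j_0}\sim|x-z|^{-1}$ you write actually delivers $|x-z|^{-(m+n+|\alpha+\beta|)}$ from the low piece rather than the claimed $|x-z|^{-N}$; the clean way is either to take $2^{j_0}\sim|x-z|^{-1/\rho}$, or---simpler still---to skip the split entirely, perform $L$ integrations by parts on each $\tilde k_j$, and sum the geometric series $\sum_j 2^{j(m+n+|\alpha+\beta|-\rho L)}$, which converges precisely when $L>(m+n+|\alpha+\beta|)/\rho$. For (d) the paper uses a continuous decomposition $\varphi(\langle\xi\rangle-t)$ and integrates in $t$, producing the $|\log|x-y||$ bound via the estimate $|k(x,y,t)|\leq Ct^{-\rho-1}/(t^{-\rho}+|x-y|)$; your sharp-cutoff alternative at $|\xi|\sim|x-z|^{-1}$ also works. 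In short, your plan is sound and more in line with the standard $\mathbb{R}^n$ Calder\'on--Zygmund treatment; the paper's argument is more direct, exploiting that the operator equivalence theorem already hands you the $\mathbb{R}^n$-oscillatory integral representation of $k$ on the nose.
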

\begin{proof}
    First, we notice that there is $\tilde{p} \in S^m_{\rho, \delta}(\mathbb{T}^n \times \mathbb{R}^n)$ such that $\tilde{p}|_{\mathbb{T}^n \times \mathbb{Z}^n} = p$ and with $\mathrm{Op}(\tilde{p}) = T$. In consequence, $k(x, y) = \int_{\mathbb{R}^n} e^{i2\pi(x-y)\cdot \xi} \tilde{p}(x, \xi)\diff\xi $. Now, the derivatives of the kernel look as follows:
        \begin{equation}
            \partial^\alpha_x \partial^\beta_y k(x, y) = \int_{\mathbb{R}^n} (-i2\pi\xi)^\beta e^{i2\pi(x-y)\cdot \xi} \sum_{\omega \leq \alpha} C_\omega (i2\pi\xi)^{\alpha - \omega} \partial^\omega_x \tilde{p}(x, \xi) \diff\xi,
            \label{eq:derivative-kernel}
        \end{equation}
    which is the kernel of an operator with symbol of order $m + |\alpha + \beta|$. Then, it is sufficient to prove the results when $|\alpha + \beta| = 0$.

    \begin{itemize}
        \item[(a)] The continuity of the kernel $k(x, y)$ is proven in \cite[Theorem~4.3.6]{ruzhansky-turunen}. By integration by parts we have that
        \begin{align*}
            (i2\pi)^{|\gamma|}(x - y)^\gamma k(x, y) =& \int_{\mathbb{R}^n} \partial^\gamma_\xi \left[e^{i2\pi(x-y)\cdot \xi} \right]  \tilde{p}(x, \xi) \diff\xi \\
            = & (-1)^{|\gamma|} \int_{\mathbb{R}^n} e^{i2\pi(x-y)\cdot \xi} \partial^\gamma_\xi \left[\tilde{p}(x, \xi) \right] \diff\xi .
        \end{align*}

        Hence, if we set $|\gamma| = N$, we get
        \begin{align*}
            |i2\pi|^N|x-y|^N| k(x, y)| \leq & \int_{\mathbb{R}^n} \langle\xi\rangle^{m-\rho N} \diff \xi.
        \end{align*}

        The last integral is finite when $N > (m + n)/\rho$, proving the result.\\

        \item[(b)] We notice that the kernel would be a finite sum of continuous functions, proving the  continuity of $k(x, y)$. Also, $\tilde{p}$ would have the same support of $p$. Hence the last integral above would be finite without any restrictions on $N$.\\

        \item[(c)] Let $ m < -n$. Then, we have the finite series
        \begin{align*}
            | k(x, y)| \leq &  \sum_{\xi \in \mathbb{Z}^n}\langle\xi\rangle^m,
        \end{align*}
        proving the boundedness of $k(x, y)$.\\

        \item[(d)] First notice that by (a), (b) it suffices to prove the estimate when $|x-y|<1$ and if $\tilde{p}(x, \xi)$ vanishes for $|\xi|<1$ uniformly on $x$. Let $m + n = 0$ and let $\varphi \in C_0^\infty(\mathbb{R})$ be supported on $[0, 1]$, such that $\int \varphi=1$, and let us define 
        \begin{equation*}
            k(x, y, t) = \int_{\mathbb{R}^n} e^{i2\pi(x-y)\cdot \xi} \tilde{p}(x, \xi) \varphi(\langle\xi\rangle-t) \diff\xi,
        \end{equation*}
        so that
        \begin{equation*}
            k(x, y) = \int_1^\infty k(x, y, t)\diff t.
        \end{equation*}
        Then, using integration by parts, we have that
        \begin{align*}
            (i2\pi)^{|\gamma|}(x - y)^\gamma k(x, y, t) = & \int_{\mathbb{R}^n} \partial^\gamma_\xi\left[ e^{i2\pi(x-y)\cdot \xi} \right] \tilde{p}(x, \xi)\varphi(\langle\xi\rangle - t) \diff \xi \\
            = & (-1)^{|\gamma|} \int_{\mathbb{R}^n} e^{i2\pi(x-y)\cdot \xi} \sum_{\omega \leq \gamma}C_\omega \partial^\omega_\xi \tilde{p}(x, \xi) \partial^{\gamma - \omega}\varphi(\langle\xi\rangle-t) \diff \xi.
        \end{align*}
        Since $\langle\xi\rangle \sim t$ in the support of $\varphi(\cdot-t)$ which has volume estimated by $Ct^{n-1}$, we get for $|\gamma|=N$
        \begin{align*}
            |x-y|^N|k(x, y, t)| \leq & \;C \int_{\supp \varphi(\cdot - t)} \langle\xi\rangle^{m - \rho N} \diff \xi \\
            \leq & \; C\int_{\supp \varphi(\cdot - t)} t^{m-\rho N}\diff\xi \leq Ct^{m+n - \rho N-1}.
        \end{align*}
        By adding the estimates for both $N = 0, 1$, we get 
        \begin{equation*}
            |k(x, y, t)| \leq C \frac{t^{ -\rho - 1}}{t^{-\rho } + |x - y|}.
        \end{equation*}           
    \end{itemize} Then, the result of evaluating the integral above is the desired estimate $|k(x, y)| \leq  C|\log |x-y|| $ for $x \neq y$.
\end{proof}
\begin{remark}
    In the following theorem, we prove the toroidal counterparts of estimates proved in \cite[Theorem~2.1]{alvarez-hounie}. Nevertheless, for the estimates in (a) we are able to drop restrictions on the order of the operator thanks to the compactness of $\mathbb{T}^n$.
\end{remark}
\begin{remark}
    The estimates below for $\sigma \geq 1$ remain valid when $\sigma \geq \varepsilon$ where $\varepsilon>0$ with the constants $C=C_\varepsilon$ depending on $\varepsilon$.
\end{remark}
\begin{theorem}
    Let $T \in \Psi^m_{\rho, \delta}(\mathbb{T}^n \times \mathbb{Z}^n) $, $0 < \rho \leq 1$, $0 \leq \delta < 1$, with symbol $p:=p(x, \xi)$ and with kernel $k:=k(x, y)$. Set $\lambda = \max\{ 0, (\delta - \rho)/2 \}$.
    \begin{itemize}
        \item[(a)] For any fixed $z \in \mathbb{T}^n$, and $\sigma \geq 1$, we have the kernel inequalities:
        \begin{equation}
            \sup_{|y-z| \leq \sigma} \int_{|x-z| > 2\sigma}|k(x, y) - k(x, z)|\diff x \leq C,
        \end{equation}
        \begin{equation}
            \sup_{|y-z| \leq \sigma} \int_{|x-z| > 2\sigma}|k(y, x) - k(z, x)|\diff x \leq C.
        \end{equation}
        \item[(b)] If $m \leq -n [(1-\rho)/2 + \lambda] $ and $\sigma < 1$, we have for any fixed $z \in \mathbb{T}^n$, 
        \begin{equation}
            \sup_{|y-z| \leq \sigma} \int_{|x-z| > 2\sigma^\rho}|k(x, y) - k(x, z)|\diff x \leq C.
        \end{equation}
        \item[(c)] If $m \leq -n (1 - \rho) / 2$ and $\sigma < 1$, we have  for any fixed $z \in \mathbb{T}^n$,
        \begin{equation}
            \sup_{|y-z| \leq \sigma} \int_{|x-z| > 2\sigma^\rho}|k(y, x) - k(z, x)|\diff x \leq C.
        \end{equation} 
    \end{itemize}
    \label{theo:sigma-kernel-estimate}
\end{theorem}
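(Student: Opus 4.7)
The three parts split naturally: (a) is a coarse off-diagonal estimate that follows directly from the pseudo-local property of the previous theorem, while (b) and (c) are refined estimates at the critical scales $|y-z|\sim\sigma$, $|x-z|\sim\sigma^\rho$ that require the decomposition technique of \'Alvarez and Hounie \cite{alvarez-hounie}. For part (a), under the standing lower bound $\sigma\geq\varepsilon>0$, the conditions $|y-z|\leq\sigma$ and $|x-z|>2\sigma$ keep the segment from $y$ to $z$ at distance $\geq\sigma$ from $x$. Applying the pseudo-local bound $|\partial_{y_j}k(x,y)|\leq C_N|x-y|^{-N}$ from the previous theorem (for $N>(m+n+1)/\rho$), the mean value theorem yields $|k(x,y)-k(x,z)|\leq C\sigma|x-z|^{-N}$, and $\int_{|x-z|>2\sigma}|x-z|^{-N}\diff x\leq C\sigma^{n-N}$ for $N>n$; choosing $N>n+1$ gives a bound $C\sigma^{n+1-N}$ uniform in $\sigma\geq\varepsilon$. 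The second estimate of (a) is symmetric using pseudo-local bounds on $\partial_{x_j}k(x,y)$.

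For parts (b) and (c), fix the smooth extension $\tilde p\in S^m_{\rho,\delta}(\mathbb{T}^n\times\mathbb{R}^n)$ and a cutoff $\phi\in C_0^\infty(\mathbb{R}^n)$ equal to $1$ on $|\xi|\leq 1$ and supported in $|\xi|\leq 2$; split $\tilde p=p_1+p_2$ with $p_1(x,\xi)=\tilde p(x,\xi)\phi(\sigma\xi)$ (supported in $|\xi|\leq 2/\sigma$) and $p_2=\tilde p-p_1$ (supported in $|\xi|\geq 1/\sigma$). Both pieces satisfy uniform $S^m_{\rho,\delta}$ estimates in $\sigma\in(0,1)$. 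For each kernel piece I would bound the $L^1$ integral on $\{|x-z|>2\sigma^\rho\}$ by Cauchy-Schwarz with weight $|x-z|^{2N}$:
\[
\int_{|x-z|>2\sigma^\rho}|g(x)|\diff x\leq C\sigma^{\rho(n/2-N)}\Bigl(\int|x-z|^{2N}|g(x)|^2\diff x\Bigr)^{1/2},\quad N>n/2,
\]
and then evaluate the weighted $L^2$ norm by pulling $(x-z)^N$ into the $\xi$-integral as $(-i/2\pi)^N\partial_\xi^N$. The low-frequency piece exploits the elementary inequality $|e^{-i2\pi(y-z)\xi}-1|\leq C\min\{\sigma|\xi|,1\}$; the high-frequency piece is handled by estimating the contributions of $k_2(x,y)$ and $k_2(x,z)$ separately using direct symbol decay.

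The decisive asymmetry between (b) and (c) is how the weighted $L^2_x$ norm is computed. In (c), $k(y,x)-k(z,x)$ depends on $x$ only through the phase $e^{-i2\pi x\xi}$, so Plancherel in $x$ reduces the $L^2$ norm to an $L^2_\xi$ integral of $\partial_\xi^N\{e^{i2\pi y\xi}\tilde p(y,\xi)-e^{i2\pi z\xi}\tilde p(z,\xi)\}$, involving only $\xi$-derivatives of $\tilde p$; balancing low- and high-frequency contributions at $|\xi|\sim\sigma^{-1}$ yields $m\leq -n(1-\rho)/2$ with no $\delta$-loss. In (b), by contrast, $k(x,y)-k(x,z)$ depends on $x$ both through $e^{i2\pi x\xi}$ and through $\tilde p(x,\xi)$, so $\|\cdot\|_{L^2_x}^2$ expands to a double $\xi,\eta$ integral in which integration by parts in $x$ to control the phase $e^{i2\pi x(\xi-\eta)}$ forces $x$-derivatives of $\tilde p$, each costing a factor $\langle\xi\rangle^\delta$. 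Careful tracking of this $\delta$-loss yields the sharper threshold $m\leq -n[(1-\rho)/2+\lambda]$. The hard part will be this precise bookkeeping of powers of $\sigma$ and $\langle\xi\rangle$ in the $(\xi,\eta)$-integral of part (b), verifying that the $\delta$-loss contributes exactly $n\lambda$ and not more; working with the continuous extension $\tilde p$ and its Euclidean kernel representation lets us transfer the argument of \cite{alvarez-hounie} almost verbatim to the torus, so no genuinely new ideas are needed beyond the toroidal adaptation.
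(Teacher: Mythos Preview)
Your plan for part (a) is fine (the paper actually takes an even cruder route: it splits $|k(x,y)-k(x,z)|\le|k(x,y)|+|k(x,z)|$ and uses only the off-diagonal decay of $k$ together with the compactness of $\mathbb{T}^n$, but your mean-value argument works equally well). Your plan for (c) is also essentially correct: once the weighted $L^2_x$ norm is reduced to an $L^2_\xi$ norm by Plancherel, only $\xi$-derivatives of $\tilde p$ appear and the threshold $m\le -n(1-\rho)/2$ drops out.

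The gap is in part (b). Two related issues:

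\emph{Decomposition.} The paper (following \'Alvarez--Hounie) does \emph{not} use a single frequency cutoff at $|\xi|\sim\sigma^{-1}$; it uses a continuous Littlewood--Paley decomposition $k(x,y)=\int_1^\infty k(x,y,t)\,dt/t$ with $k(x,y,t)$ localized to $\langle\xi\rangle\sim t$. This localization to a single dyadic shell is what makes the subsequent $L^2$ step work cleanly: on $\langle\xi\rangle\sim t$ one can trade $\langle\xi\rangle$ for $t$ freely. With your single cutoff the low-frequency piece contains all $|\xi|\le 2/\sigma$, and the weighted $L^2$ estimate becomes awkward.

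\emph{The $L^2_x$ step.} More seriously, your proposed double-$(\xi,\eta)$-integral with integration by parts in $x$ is \emph{not} how the paper handles the $x$-dependence of $\tilde p(x,\xi)$ in (b). The paper writes the relevant expression as a pseudo-differential operator (with symbol $\langle\xi\rangle^{n(1-\rho)/2+\rho|\beta|}\partial_\xi^\beta[(e^{i2\pi(z-y)\cdot\xi}-1)\tilde p(x,\xi)]\chi(\langle\xi\rangle/t)$, of order $\le -n\lambda$) applied to a concrete $L^2$ function, and then invokes Hounie's $L^2$-boundedness theorem (Theorem~\ref{theo:22boundR}) as a black box. That theorem is precisely where the $\lambda=\max\{0,(\delta-\rho)/2\}$ enters: it is the sharp $L^2$ bound for $S^{-n\lambda}_{\rho,\delta}$ when $\delta>\rho$, and its proof is a nontrivial Cotlar--Stein argument. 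Your ``careful tracking of the $\delta$-loss in the $(\xi,\eta)$-integral'' would amount to re-proving Hounie's theorem inside this estimate, and you have not carried that out. Indeed, naive integration by parts in $x$ costs $\langle\xi\rangle^{\delta M}$ for $M>n$ derivatives, which does not directly balance against $\langle\xi-\eta\rangle^{-M}$ when $\delta>\rho$; one needs the almost-orthogonality machinery of \cite{hounie}.

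Your closing remark that the argument of \cite{alvarez-hounie} transfers ``almost verbatim'' is in fact correct --- that is exactly what the paper does --- but \'Alvarez--Hounie themselves use Hounie's $L^2$ theorem at this step, not the direct double-integral computation you describe. Replace your $(\xi,\eta)$-integral plan by an appeal to Theorem~\ref{theo:22boundR}, and use the continuous $t$-decomposition rather than a single cutoff; then (b) goes through as in the paper.
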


\begin{proof}
    \begin{itemize}
        \item[(a)] First, we notice that $|x-y| \geq |x - z| - |z - y| > \sigma$ in the domain of evaluation. Then by the triangle inequality and \cref{eq:teo3-1a} we have
        \begin{equation*}
            \sup_{|y-z| \leq \sigma} \int_{|x-z| > 2\sigma}|k(x, y) - k(x, z)|\diff x  
        \end{equation*}
        \begin{equation*}
            \leq \sup_{|y-z| \leq \sigma} \int_{|x-z| > 2\sigma}|k(x, y)|\diff x + \sup_{|y-z| \leq \sigma} \int_{|x-z| > 2\sigma}|k(x, z)|\diff x
        \end{equation*}
        \begin{equation*}
            \leq  \int_{|x-y|>\sigma} C_N|x-y|^{-N} \diff x + \int_{|x-z|>\sigma} C_N|x-z|^{-N} \diff x
        \end{equation*}
        \begin{equation*}
            \leq  \int_{\mathbb{T}^n} C_N\sigma^{-N} \diff x + \int_{\mathbb{T}^n} C_N\sigma^{-N} \diff x \leq C.
        \end{equation*}

        \item[(b)] As before, let $\tilde{p}$ be the corresponding symbol on $\mathbb{T}^n \times \mathbb{R}^n$.  Let $\varphi \in C_0^\infty(\mathbb{R})$ supported in $[1/2, 1]$ such that 
        \begin{equation*}
            \int_0^\infty\varphi(1/t)/t \diff t =\int_1^2\varphi(1/t)/t \diff t = 1.
        \end{equation*} 
        Define 
        \begin{equation*}
            k(x, y, t) = \int_{\mathbb{R}^n} e^{i2\pi(x - y) \cdot \xi} \tilde{p}(x, \xi) \varphi(\langle\xi\rangle/t) \diff \xi,
        \end{equation*}
        so that  
        \begin{equation*}
            k(x, y) = \int_0^\infty k(x, y, t) \diff t = \int_1^\infty k(x, y, t) \diff t.
        \end{equation*}
        Let $N > n/2$ be an integer, then we obtain the estimates
        \begin{equation*}
            \int_{|x-z|>2\sigma^\rho} |k(x, y, t) - k(x, z, t)| \diff x
        \end{equation*}
        \begin{equation*}
            \leq  \left[ \int_{\mathbb{T}^n} (1+t^{2\rho}|x-z|^2)^N |k(x, y, t) - k(x, z, t)| \diff x  \right]^{1/2} \left[ \int_{\mathbb{T}^n} (1+t^{2\rho}|x-z|^2)^{-N} \diff x \right]^{1/2} 
        \end{equation*}
        \begin{equation}
            \leq C\left[ \int_{\mathbb{T}^n} (1+t^{2\rho}|x-z|^2)^N |k(x, y, t) - k(x, z, t)| \diff x  \right]^{1/2}t^{-\rho n/2}.
            \label{eq:firstbound-b}
        \end{equation}
        So, for $|\alpha| \leq N$ we have that
        \begin{equation*}
            t^{\rho|\alpha|}(x-z)^\alpha \int_{\mathbb{R}^n} \left[ e^{i2\pi(x - y) \cdot \xi} - e^{i2\pi(x - z) \cdot \xi} \right]\tilde{p}(x, \xi)\varphi(\langle\xi\rangle/t)\diff \xi 
        \end{equation*}
        \begin{equation*}
            =t^{\rho|\alpha|}(x-z)^\alpha \int_{\mathbb{R}^n} e^{i2\pi(x - z) \cdot \xi} \left[ e^{i2\pi(z - y) \cdot \xi} - 1 \right]\tilde{p}(x, \xi)\varphi(\langle\xi\rangle/t)\diff \xi 
        \end{equation*}
        \begin{equation*}
            =\sum_{\beta\leq\alpha} C_{\alpha\beta} t^{\rho|\alpha|} \int_{\mathbb{R}^n} e^{i2\pi(x - z) \cdot \xi} \partial^\beta_\xi \left[ 
            \left(e^{i2\pi(z - y) \cdot \xi} - 1 \right) \tilde{p}(x, \xi) \right] \partial^{\alpha-\beta}_\xi \varphi(\langle\xi\rangle/t)\diff \xi.
        \end{equation*}
        Now, $|e^{i2\pi(x - z) \cdot \xi}-1| \leq |x-z||\xi|\leq t\sigma$ in the support of $\varphi(\langle\xi\rangle/t)$. On the other hand, $|\partial^\gamma_\xi e^{i2\pi(x - z) \cdot \xi}| \leq C_\gamma|y-z|^{|\gamma|} \leq C_\gamma \sigma ^{|\gamma|} \leq C\langle\xi\rangle^{-|\gamma|}(t\sigma)^{|\gamma|} $. Let us assume that $t\sigma < 1$, then for any $\chi \in C_0^\infty(\mathbb{R}^n)$ that is equal to $1$ on the support of $\varphi(\langle\xi\rangle/t)$ the set
        \begin{equation*}
            \Sigma_{\alpha\beta}=\left\{ \langle\xi\rangle^{n(1 - \rho)/2 + \rho|\beta|} \partial^\beta_\xi \left[\left( e^{i2\pi(x - z) \cdot \xi}-1 \right) 
            \tilde{p}(x, \xi) \right] \chi(\langle\xi\rangle/t) : |y-z| < \sigma,\; z \in \mathbb{T}^n \right\}
        \end{equation*}
        has measure bounded by $C\langle\xi\rangle^{n(1 - \rho)/2 + \rho|\beta|} (t\sigma)\langle\xi\rangle^{m-\rho |\beta|} \leq Ct\sigma \langle\xi\rangle^{-n\lambda} $. Thus, we can consider $\tilde{p}(x, \xi)$ as a symbol on $\mathbb{R}^n\times\mathbb{R}^n$ and by \cref{theo:22boundR} each of the corresponding operators with symbols $$\langle\xi\rangle^{n(1 - \rho)/2 + \rho|\beta|} \partial^\beta_\xi \left[\left( e^{i2\pi(x - z) \cdot \xi}-1 \right) 
            \tilde{p}(x, \xi) \right] \chi(\langle\xi\rangle/t),$$ on the set $\Sigma_{\alpha\beta}$ are bounded on $L^2(\mathbb{R}^n)$ with norm estimated by $Ct\sigma$.
        Hence \cref{eq:firstbound-b} can be estimated using Plancherel's identity by 
        \begin{equation}
            Ct\sigma\sum_{\beta\leq\alpha,\;|\alpha|\leq N} C_{\alpha\beta}t^{\rho|\alpha|}\left\|  
             \langle\xi\rangle^{-n(1-\rho)/2 - \rho|\beta|} t^{-|\alpha-\beta|} \partial^{\alpha-\beta}_\xi \varphi(\langle\xi\rangle/t) \right\|_{L^2(\mathbb{R}^n)}t^{-\rho n/2} 
        \end{equation}
        \begin{equation}
            \leq Ct^{\rho n/2}t^{-\rho n/2}t\sigma =Ct\sigma.
            \label{eq:tsigma<1-b}
        \end{equation}
        Now, let us dropp the restriction $t\sigma < 1$. For $|\alpha| = N$ we have that
        \begin{equation*}
            \int_{|x-z|>2\sigma^\rho} |k(x, y, t)| \diff x 
        \end{equation*}
        \begin{equation*}
            \leq \left[ \int_{\mathbb{T}^n} \left( t^{2\rho} |x-y|^2 \right)^N |k(x, y, t)|^2 \diff x \right]^{1/2} \left[ \int_{|x-y|>\sigma^\rho} \left(|t^{2\rho}|x-y|^2\right)^{-N}\diff x \right]^{1/2}
        \end{equation*}
        \begin{equation}
            \leq C \left[ \int_{\mathbb{T}^n} \left( t^{2\rho}|x-y|^2 \right)^N|k(x,y,t)|^2 \diff x \right]^{1/2}t^{-\rho N}\sigma^{\rho(n/2 - N)}.
            \label{eq:secondbound-b}
        \end{equation} 
        Let $|\alpha|=N$, then 
        \begin{equation*}
            t^{\rho|\alpha|} (x-y)^\alpha \int_{\mathbb{R}^n} e^{i2\pi(x - y) \cdot \xi} \tilde{p}(x, \xi)\varphi(\langle\xi\rangle/t)\diff\xi 
        \end{equation*}
        \begin{equation*}
            = \sum_{\beta\leq\alpha} C_{\alpha\beta}t^{\rho|\alpha|} \int_{\mathbb{R}^n} e^{i2\pi(x - y) \cdot \xi} \partial^\beta_\xi \tilde{p}(x, \xi)t^{-|\alpha-\beta|} \partial^{\alpha-\beta}_\xi \varphi(\langle\xi\rangle/t)\diff \xi,
        \end{equation*}
        and for each $\beta\leq\alpha$ the $L^2(\mathbb{T}^n)$ norm as a $x$-function of 
        \begin{equation*}
            \int_{\mathbb{R}^n} e^{i2\pi(x - y) \cdot \xi} \partial^\beta_\xi \tilde{p}(x, \xi)t^{-|\alpha-\beta|} \partial^{\alpha-\beta}_\xi \varphi(\langle\xi\rangle/t)\diff \xi
        \end{equation*}
        is equal to the $L^2(\mathbb{T}^n)$ norm as a $x$-function of 
        \begin{equation*}
            \int_{\mathbb{R}^n} e^{i2\pi x \cdot \xi} \partial^\beta_\xi \tilde{p}(x+y, \xi)t^{-|\alpha-\beta|} \partial^{\alpha-\beta}_\xi \varphi(\langle\xi\rangle/t)\diff \xi.
        \end{equation*}
        On the other hand, the set $\{ \langle\xi\rangle^{n(1-\rho)/2 + \rho|\beta|} \partial^\beta_\xi\tilde{p}(x + y, \xi) : y \in \mathbb{T}^n  \}$ has measure bounded by $C\langle\xi\rangle^{n(1-\rho)/2 + \rho|\beta|} \langle\xi\rangle^{m- \rho|\beta|} = C\langle\xi\rangle^{n(1-\rho)/2 +m}   $ so their respective operators are bounded on $L^2(\mathbb{R}^n)$ by \cref{theo:22boundR}. Thus, \cref{eq:secondbound-b} can be estimated by  

        \begin{equation*}
            C\sum_{\beta\leq\alpha,\; |\alpha|=N} t^{\rho|\alpha|}t^{-n(1-\rho)/2 - \rho|\beta|}t^{-\rho|\alpha-\beta|}t^{n/2}t^{\rho N} \sigma^{\rho(n/2 - N)} \leq C(t\sigma)^{\rho (n/2 - N)}.
        \end{equation*}
        In a similar way, we can estimate 
        \begin{equation*}
            \int_{|x-z|>2\sigma^\rho} |k(x, z, t)| \diff x \leq C(t\sigma)^{\rho(n/2 - N)}.
        \end{equation*}
        Using these estimates and \cref{eq:tsigma<1-b} we get the result from the following expression:
        \begin{equation}
            \int_{|x-z| > 2\sigma^\rho}|k(x, y) - k(x, z)|\diff x \leq C\left[ \int_1^{1/\sigma} t\sigma + \int_{1/\sigma}^\infty(t\sigma)^{\rho(n/2-N)} \right]\diff t/t \leq C
            \label{eq:lastbound-b}.
        \end{equation} 
        Thus, completing the proof.\\

        \item[(c)] First, let us notice that
        \begin{equation*}
            k(y, x, t) - k(z, x, t) 
        \end{equation*}
        \begin{equation*}
            =\int_{\mathbb{R}^n} e^{-i2\pi (x-y) \cdot \xi}[\tilde{p}(y, \xi) - \tilde{p}(z, \xi)]\varphi(\langle\xi\rangle/t) \diff \xi
        \end{equation*}
        \begin{equation*}
            +  
            \int_{\mathbb{R}^n} e^{-i2\pi x \cdot \xi} \left[ e^{i2\pi y \cdot \xi} -e^{i2\pi z \cdot \xi}  \right]\tilde{p}(z, \xi)\varphi(\langle\xi\rangle/t) \diff \xi
        \end{equation*}
        \begin{equation*}
            =f(x-y, y, z, t) + g(x, y, z, t).
        \end{equation*}
        Then, we obtain 
        \begin{align*}
            \int_{\mathbb{T}^n}|g(x, y, z, t)|\diff x \leq & Ct^{-\rho n/2} \left[ \int_{\mathbb{T}^n} |g(x, y, z, t)|^2(1 + t^{2\rho}|x|^2)^N \diff x  \right]^{1/2}\\
            \leq & Ct^{-\rho n/2} \sum_{|\alpha|\leq N}  \left[ \int_{\mathbb{T}^n} |(t^\rho x)^\alpha g(x, y, z, t)|^2\diff x  \right]^{1/2}.
        \end{align*}
        Let us notice that $g$ is the Fourier transform on $\mathbb{R}^n$ on the first variable of the function $G(\xi, y, z, t) = \left[ e^{i2\pi y \cdot \xi} -e^{i2\pi z \cdot \xi}  \right]\tilde{p}(z, \xi)\varphi(\langle\xi\rangle/t)$. Also, as above, $|\partial^\gamma_\xi( e^{i2\pi y \cdot \xi} -e^{i2\pi z \cdot \xi}  )| \leq Ct\sigma$ when $t\sigma<1$. Hence, assuming $t\sigma < 1$ and using Plancherel's identity, we get
        \begin{align*}
            \int_{\mathbb{T}^n}|g(x, y, z, t)|\diff x \leq &Ct^{-\rho n/2} \sum_{|\alpha|\leq N} \left\| \partial^\alpha_\xi G(\xi, y, z ,t) \right\|_{L^2(\mathbb{R}^n)}\\ 
            \leq &Ct^{-\rho n/2} \sum_{\beta\leq\alpha,\;|\alpha|\leq N} \left\| \partial^\beta\xi \left[ (e^{i2\pi y \cdot \xi} -e^{i2\pi z \cdot \xi})  \tilde{p}(z, \xi)\right]  \partial^{\alpha-\beta}_\xi\varphi(\langle\xi\rangle/t)  \right\|_{L^2(\mathbb{R}^n)} \\
            \leq & Ct^{-\rho n/2} \sum_{\beta\leq\alpha,\;|\alpha|\leq N} t\sigma t^{m-\rho|\beta|}t^{-|\alpha-\beta|}t^{n/2} \\
            \leq& Ct\sigma t^{n(1-\rho)/2 + m} \leq Ct\sigma.
        \end{align*}
        Now, we drop the restriction $t\sigma<1$ and we notice that
        \begin{equation*}
            g(x, y, z, t) = \int_{\mathbb{R}^n} e^{-i2\pi (x-z) \cdot \xi} \left[ e^{i2\pi (y-z) \cdot \xi} -1 \right]\tilde{p}(z, \xi)\varphi(\langle\xi\rangle/t) \diff \xi.
        \end{equation*}
        Thus, we have that
        \begin{equation*}
            \int_{\mathbb{T}^n} |g(x, y, z, t)|\diff x \leq Ct^{-\rho N}\sigma^{\rho(n/2 - N)} \left[ \int_{\mathbb{T}^n} (t^{2\rho}|x-z|^2)^N|g(x, y, z, t)|^2\diff x \right].
        \end{equation*}
        We can use Plancherel's identity as above, to get
        \begin{align*}
            \int_{\mathbb{T}^n} |g(x, y, z, t)|\diff x \leq & Ct^{-\rho N}\sigma^{\rho(n/2 - N)} \sum_{\beta\leq\alpha,\;|\alpha|=N} \left\| \partial^\beta_\xi \left[ (e^{i2\pi (y-z) \cdot \xi} -1)\tilde{p}(z, \xi) \right] \partial^{\alpha-\beta}_\xi\varphi(\langle\xi\rangle/t) \right\|_{L^2(\mathbb{R}^n)} \\
            \leq& Ct^{-\rho N}\sigma^{\rho(n/2 - N)} \sum_{\beta\leq\alpha,\;|\alpha|=N}t^{-|\alpha-\beta|}t^{n/2} \\
            \leq & C(t\sigma)^{\rho(n/2-N)}.
        \end{align*}
    \end{itemize}
    We can use the same procedures to find these bounds for $f(x-y, y,z, t)$ by noticing that $\int_{|x-z|>2\sigma^\rho} |f(x-y, y,z, t)|\diff x \leq \int_{|x|>\sigma^\rho} |f(x, y,z, t)|\diff x$. Then the desired bound comes from the calculation in \cref{eq:lastbound-b}.
\end{proof}

\subsection{Boundedness results}
\label{section:boundedness}
Here we state and prove our main results regarding the boundedness of a certain classes of toroidal pseudo-differential operators.
\begin{remark}
    Upon inspecting the proof of the Euclidean case (\cref{theo:22boundR}) in \cite{hounie}, we can argue that the proof of the analogous case in $L^2(\mathbb{T}^n)$ (\cref{theo:22boundT}) is a faithful reproduction and does not need to be written in this document. However, we encourage the reader to check \cite{hounie} for more details.
\end{remark}

\begin{theorem}
    Let $\tilde{p}:\mathbb{T}^n \times \mathbb{R}^n \rightarrow \mathbb{C}$ be a symbol such that for $0<\rho\leq1$, $0\leq \delta < 1$, $m\leq -n \lambda = -n\max\{0, (\delta - \rho)/2\}$ and $|\alpha|,|\beta| \leq \lceil n/2\rceil$ satisfies: 
    \begin{equation}
        \left|\partial^\alpha_\xi \partial^\beta_x \tilde{p}(x, \xi)\right| \leq C_{\alpha\beta}\langle\xi\rangle^{m-\rho|\alpha| + \delta|\beta|}.
    \end{equation}

    Then $\mathrm{Op}(\tilde{p})$ is bounded from $L^2(\mathbb{T}^n)$ into $L^2(\mathbb{T}^n)$ with norm proportional to the best bound $C_{\alpha\beta}$.
    \label{theo:22boundT}
\end{theorem}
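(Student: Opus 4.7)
The authors' remark just before the statement points out that the proof is a faithful reproduction of Hounie's Euclidean argument, so the plan is to transfer that proof to the torus, checking only that the compact spatial variable $x\in\mathbb{T}^n$ does not obstruct any step. To begin, I would invoke the extension theorem of the preliminaries to treat $\tilde{p}$ as a symbol on $\mathbb{T}^n\times\mathbb{R}^n$ with continuous frequency variable, writing
\[
\mathrm{Op}(\tilde p)f(x)=\int_{\mathbb{T}^n}\!\int_{\mathbb{R}^n} e^{i2\pi(x-y)\cdot\xi}\,\tilde p(x,\xi)\,f(y)\,\diff\xi\,\diff y,
\]
so that integration-by-parts in $\xi$ and Plancherel on $\mathbb{R}^n_\xi$ are available. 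Next I would introduce a Littlewood--Paley partition of unity $1=\psi_0(\xi)+\sum_{j\ge 1}\varphi(\langle\xi\rangle/2^j)$, with $\varphi\in C_0^\infty(\mathbb{R})$ supported in $[1/2,2]$, and set $\tilde p_j(x,\xi)=\tilde p(x,\xi)\varphi_j(\xi)$, $T_j:=\mathrm{Op}(\tilde p_j)$. The low-frequency piece $T_0$ has compactly supported symbol, hence is trivially bounded on $L^2(\mathbb{T}^n)$, so the task reduces to controlling $\sum_{j\ge 1}T_j$.

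The heart of the proof is an almost-orthogonality bound of Cotlar--Stein type:
\[
\|T_j^*T_k\|_{L^2(\mathbb{T}^n)\to L^2(\mathbb{T}^n)}+\|T_jT_k^*\|_{L^2(\mathbb{T}^n)\to L^2(\mathbb{T}^n)}\le C\,2^{-\epsilon|j-k|},\qquad \epsilon>0.
\]
To obtain this, one writes the kernel of $T_j^*T_k$ as an iterated oscillatory integral in $(x,y,z,\xi,\eta)$ with phase $(z-x)\cdot\eta+(y-z)\cdot\xi$, then integrates by parts in $z$ (each derivative landing on the product $\overline{\tilde p_j}\,\tilde p_k$ and producing a factor of order $\langle\xi\rangle^{\delta}+\langle\eta\rangle^{\delta}$) and in $\xi,\eta$ (each derivative producing a gain $\langle\xi\rangle^{-\rho}$ together with a separation factor $|\xi-\eta|^{-1}$). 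Performing at most $\lceil n/2\rceil$ such integrations in each variable, combining with Plancherel in $x\in\mathbb{T}^n$ and Schur's test, yields the decay above. The order restriction $m\le -n\lambda=-n\max\{0,(\delta-\rho)/2\}$ is precisely the balance point between the $\delta$-loss in $x$ and the $\rho$-gain in $\xi$ that keeps the diagonal norms $\|T_j\|_{L^2\to L^2}$ uniformly bounded and ensures the off-diagonal decay. The dependence on $C_{\alpha\beta}$ is linear throughout because each symbol estimate enters linearly into the kernel bounds, so the final Cotlar--Stein constant is proportional to the maximum of the bounds with $|\alpha|,|\beta|\le\lceil n/2\rceil$.

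The main obstacle is the exponent bookkeeping in the forbidden range $\delta>\rho$, where the naive Calder\'on--Vaillancourt trick fails and one must be careful to spend derivatives in the order that extracts both the cancellation in $|j-k|$ and the sharp weight $n\lambda$. The novelty relative to Hounie's proof is purely cosmetic: Plancherel on $\mathbb{T}^n_x$ replaces Plancherel on $\mathbb{R}^n_x$ (giving, if anything, cleaner integrability since the $x$-integrals are over a compact domain), and integration by parts in $x$ requires no boundary terms because $\mathbb{T}^n$ has no boundary. After these identifications are made, the Cotlar--Stein lemma applied to $\{T_j\}_{j\ge 0}$ yields $\|T\|_{L^2(\mathbb{T}^n)\to L^2(\mathbb{T}^n)}\le C\sup\{C_{\alpha\beta}:|\alpha|,|\beta|\le\lceil n/2\rceil\}$, completing the proof.
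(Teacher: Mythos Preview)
Your proposal is correct and matches the paper's approach: the paper does not write out a proof at all, but simply states in the preceding remark that the argument is a ``faithful reproduction'' of Hounie's Euclidean $L^2$-continuity proof \cite{hounie} and refers the reader there. Your outline---Littlewood--Paley decomposition of the symbol followed by Cotlar--Stein almost-orthogonality estimates, with the observation that replacing $\mathbb{R}^n_x$ by the compact $\mathbb{T}^n_x$ only simplifies the integrability and boundary considerations---is precisely the content of that reference, so you have in fact supplied more detail than the paper itself.
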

The previous result helps us prove an auxiliar result that will be useful in the proof of the main result of this paper.
\begin{theorem}
    Let $T \in \Psi^m_{\rho, \delta}(\mathbb{T}^n \times \mathbb{Z}^n) $, $0 < \rho \leq 1$, $0 \leq \delta < 1$, $m \leq - n [(1-\rho)/2 + \lambda] $, then $T$ is a continuous mapping
    \begin{itemize}
        \item[(a)]  from $L^2(\mathbb{T}^n)$ into $L^{2/\rho}(\mathbb{T}^n)$,\\
        \item[(b)] from $L^{2/(2-\rho)}(\mathbb{T}^n)$ into $L^{2}(\mathbb{T}^n)$.
    \end{itemize}
    \label{theo:Lbounds}
\end{theorem}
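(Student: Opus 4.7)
The plan is to factor $T$ through $L^2(\mathbb{T}^n)$ using the Bessel potential with exponent $s := n(1-\rho)/2 \geq 0$. This exponent is chosen so that the critical Sobolev embedding $H^s(\mathbb{T}^n) \hookrightarrow L^{2/\rho}(\mathbb{T}^n)$ holds (observe $1/2 - \rho/2 = (1-\rho)/2 = s/n$), and dually $L^{2/(2-\rho)}(\mathbb{T}^n) \hookrightarrow H^{-s}(\mathbb{T}^n)$.

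For part (a), I would write $T = J^{-s} \circ (J^s T)$. Since $J^s$ is a Fourier multiplier with symbol $\langle\xi\rangle^s$, the toroidal composition formula of Ruzhansky-Turunen gives $J^s T \in \Psi^{m+s}_{\rho,\delta}(\mathbb{T}^n \times \mathbb{Z}^n)$. The hypothesis on $m$ then yields
\begin{equation*}
    m + s \leq -n[(1-\rho)/2 + \lambda] + n(1-\rho)/2 = -n\lambda,
\end{equation*}
so \cref{theo:22boundT} shows that $J^s T : L^2(\mathbb{T}^n) \to L^2(\mathbb{T}^n)$ is bounded. Since $J^{-s} : L^2(\mathbb{T}^n) \to H^s(\mathbb{T}^n)$ is an isomorphism of Hilbert spaces (by Parseval) and $H^s(\mathbb{T}^n) \hookrightarrow L^{2/\rho}(\mathbb{T}^n)$ is the classical Sobolev embedding on the torus, composing these mappings yields (a).

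For part (b), the cleanest route is duality. The adjoint theorem stated in the preliminaries gives $T^* \in \Psi^m_{\rho,\delta}(\mathbb{T}^n \times \mathbb{Z}^n)$, so part (a) applied to $T^*$ provides $T^* : L^2(\mathbb{T}^n) \to L^{2/\rho}(\mathbb{T}^n)$ continuously. Dualizing with respect to the $L^2$-pairing and using $(L^{2/\rho})^* = L^{2/(2-\rho)}$ produces $T = (T^*)^* : L^{2/(2-\rho)}(\mathbb{T}^n) \to L^2(\mathbb{T}^n)$, as required. Alternatively, one could directly factor $T = (T J^s) \circ J^{-s}$ and rely on the Hardy-Littlewood-Sobolev mapping $J^{-s}: L^{2/(2-\rho)}(\mathbb{T}^n) \to L^2(\mathbb{T}^n)$, together with the $L^2$-boundedness of $TJ^s \in \Psi^{m+s}_{\rho,\delta}$.

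The main technical obstacle is verifying that $J^s T$ (and $TJ^s$) really belongs to $\Psi^{m+s}_{\rho,\delta}$ on the torus. This follows from the toroidal asymptotic expansion for compositions: since $\Delta_\xi^\alpha \langle\xi\rangle^s$ lives in $S^{s-|\alpha|}_{1,0}$ and the $x$-derivatives of $p(x,\xi)$ lie in $S^{m+\delta|\alpha|}_{\rho,\delta}$, each term in the expansion has order $m + s - (1-\delta)|\alpha|$, which strictly decreases because $\delta < 1$. Hence the series defines a valid symbol of order $m+s$ with principal part $\langle\xi\rangle^s p(x,\xi)$, closing the argument.
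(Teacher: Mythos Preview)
Your argument is correct and follows essentially the same strategy as the paper: factor $T$ through the Bessel potential $J^{\pm s}$ with $s=n(1-\rho)/2$, invoke \cref{theo:22boundT} for the $L^2$-boundedness of the order $\leq -n\lambda$ piece, and use Hardy--Littlewood--Sobolev (your Sobolev embedding formulation) for the remaining factor. The paper treats both (a) and (b) by the direct factorisations $T=J^{-s}(J^sT)$ and $T=(TJ^s)J^{-s}$, which you list as your alternative for (b); your primary duality route via $T^*\in\Psi^m_{\rho,\delta}$ is a legitimate variant but not how the paper proceeds. Your extra paragraph justifying $J^sT,\,TJ^s\in\Psi^{m+s}_{\rho,\delta}$ via the toroidal composition expansion is a welcome detail that the paper leaves implicit.
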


\begin{proof}
    First we notice that $J^{n(1-\rho)/2}T$ and $TJ^{n(1-\rho)/2}$ have order $\leq -n\lambda$, so they are bounded on $L^2(\mathbb{T}^n)$ by \cref{theo:22boundT}. Also, by the Hardy-Littlewood-Sobolev inequality, we have that $J^{-n(1-\rho)/2}$ is a continuous mapping from $L^2(\mathbb{T}^n)$ into $L^{2/\rho}(\mathbb{T}^n)$ and from $L^{2/(2-\rho)}(\mathbb{T}^n)$ into $L^2(\mathbb{T}^n)$. Hence 
    \begin{equation*}
        \| Tf \|_{2/\rho} = \| TJ^{n(1-\rho)/2}J^{-n(1-\rho)/2}f\|_{2/\rho} \leq C\| TJ^{n(1-\rho)/2}f \|_2 \leq C\|f\|_2 ,
    \end{equation*}
    and 
    \begin{equation*}
        \| Tf \|_{2} = \| J^{-n(1-\rho)/2}J^{n(1-\rho)/2}Tf\|_{2} \leq C\| J^{-n(1-\rho)/2}f \|_2 \leq C\| f \|_{2/(2-\rho)}.
    \end{equation*}
    Hence, proving the desired result.
\end{proof}
We now prove a result that will imply the boundedness of a certain class of toroidal pseudo-differential operators from $L^1(\mathbb{T}^n)$ into weak-$L^1(\mathbb{T}^n)$.
\begin{remark}
    Notice that the following theorem applies not only for pseudo-differential operators but operators with operator valued kernels, as in \cref{def:operator-kernel}, between Bochner-Lebesgue spaces $L^p(\mathbb{T}^n;X)$, as in \cref{def:bochner-lebesgue}, where $X$ is a Banach space. Also, in \cite[Theorem~4.1]{alvarez-milman}, the original statement in $\mathbb{R}^n$ requires the $D_{1,\alpha}$ condition, namely, for some sequence $(d_j) \in \ell^1(\mathbb{R}^n)$ we have:
    
    \begin{equation*}
        \int_{C_j(z, \sigma^\alpha)} \|k(x, y) - k(x, z)\|_{\mathcal{B}(X, Y)}\diff x \leq d_j|C_j(z, \sigma^\alpha)|,
    \end{equation*}
    where $C_j(z, \sigma^\alpha) = \{ x : 2^j\sigma^\alpha < |x - z| \leq 2^{j+1}\sigma^\alpha\}$ and $|y-z|<\sigma$.\\
    
    However the authors argue that this condition can be substituted by requiring the operators satisfy the estimates stated below.
\end{remark}

\begin{theorem}
    Let $T$ be be an operator with operator valued kernel, as in \cref{def:operator-kernel},  that extends to a bounded operator from $L^2(\mathbb{T}^n; X)$ into $L^2(\mathbb{T}^n; Y)$ and from $L^q(\mathbb{T}^n; X)$ into $L^2(\mathbb{T}^n; Y)$ such that for some $\alpha$ and $\beta$:
    \begin{equation}
        \frac{1}{q} = \frac{1}{2} + \frac{\beta}{n} \; , \; (1-\alpha)\frac{n}{2} \leq \beta < \frac{n}{2}.
        \label{eq:alpha_condition}
    \end{equation}
    Also assume that its kernel $k(x, y)$ satisfies the following condition when $|y-z|<\sigma$
    \begin{equation}
        \int_{|x-z|>c\sigma^\alpha}\|k(x, y)-k(x,z)\|_{\mathcal{B}(X, Y)}\diff x\leq C \;, \quad 0<\sigma < 1,
        \label{eq:hyp-sigma<1}
    \end{equation}
    \begin{equation}
        \int_{|x-z|>c\sigma}\|k(x, y)-k(x,z)\|_{\mathcal{B}(X, Y)}\diff x \leq C \;, \quad 1 \leq \sigma.
        \label{eq:hyp-sgima>1}
    \end{equation}
    Then the operator $T$ extends to a bounded operator from $L^1(\mathbb{T}^n; X)$ into weak-$L^{1}(\mathbb{T}^n; X)$
    \label{theo:L1-weak}
\end{theorem}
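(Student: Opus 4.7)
The proof will follow the classical Calder\'on--Zygmund paradigm, suitably adapted to the operator-valued kernel setting on the torus and to the non-isotropic enlargement factor $\sigma^\alpha$ appearing in the hypotheses. Without loss of generality I may assume $\lambda > \|f\|_{L^1(\mathbb{T}^n;X)}$, since otherwise the weak estimate $|\{x: \|Tf(x)\|_Y > \lambda\}| \leq |\mathbb{T}^n| \leq \|f\|_1/\lambda$ is immediate.

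First I would apply the Calder\'on--Zygmund decomposition of the scalar function $\|f(\cdot)\|_X$ at level $\lambda$, producing a disjoint family of dyadic cubes $\{Q_j\}$ with centers $z_j$ and side lengths $\sigma_j$. This yields $f = g + b$ with $g = f$ outside $\bigcup_j Q_j$ and $g = (f)_{Q_j}$ on each $Q_j$, and $b = \sum_j b_j$ where $b_j := (f - (f)_{Q_j})\chi_{Q_j}$ has mean zero. The standard properties hold: $\|g\|_\infty \leq C\lambda$, $\|g\|_1 \leq \|f\|_1$, $\|b_j\|_1 \leq C\lambda|Q_j|$, and $\sum_j|Q_j| \leq \|f\|_1/\lambda$. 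Next I would introduce the enlarged exceptional set adapted to the kernel condition: let $Q_j^*$ be the cube concentric with $Q_j$ and side $c\sigma_j^\alpha$ when $\sigma_j < 1$ (respectively $c\sigma_j$ when $\sigma_j \geq 1$), and set $E^* := \bigcup_j Q_j^*$. The estimate then splits as
\begin{equation*}
|\{\|Tf\|_Y > \lambda\}| \leq |\{\|Tg\|_Y > \lambda/2\}| + |E^* \cap \{\|Tb\|_Y > \lambda/2\}| + |(E^*)^c \cap \{\|Tb\|_Y > \lambda/2\}|.
\end{equation*}

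The first and third terms are handled by standard means. For the good part, the interpolation $\|g\|_2^2 \leq \|g\|_\infty\|g\|_1 \leq C\lambda\|f\|_1$ together with the $L^2$-boundedness of $T$ and Chebyshev's inequality gives $|\{\|Tg\|_Y > \lambda/2\}| \leq C\|f\|_1/\lambda$. For the third term I would use the cancellation $\int b_j = 0$ to rewrite $Tb_j(x) = \int [k(x,y)-k(x,z_j)] b_j(y)\,dy$; then applying Fubini and the hypotheses \cref{eq:hyp-sigma<1} or \cref{eq:hyp-sgima>1} (depending on the size of $\sigma_j$, and using that $(E^*)^c \subset \{|x-z_j| > c\sigma_j^\alpha\}$ or $\{|x-z_j|>c\sigma_j\}$ respectively) produces $\int_{(E^*)^c}\|Tb_j(x)\|_Y\,dx \leq C\|b_j\|_1 \leq C\lambda|Q_j|$, and summing over $j$ followed by Chebyshev yields a bound by $C\|f\|_1/\lambda$.

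The main obstacle is the middle term $|E^* \cap \{\|Tb\|_Y > \lambda/2\}|$. Here it is crucial to recognize that one \emph{cannot} simply bound $|E^*|$ by $C\|f\|_1/\lambda$: for $\sigma_j < 1$ with $\alpha < 1$ one has $|Q_j^*| \simeq \sigma_j^{n\alpha} \gg |Q_j|$, and the geometric sum $\sum_j \sigma_j^{n\alpha}$ is not controlled by $\sum_j|Q_j|$. This is precisely where the $L^q \to L^2$ boundedness of $T$ and the balancing relation $(1-\alpha)n/2 \leq \beta < n/2$ of \cref{eq:alpha_condition} enter. My plan is to apply Chebyshev and the $L^q$ hypothesis,
\begin{equation*}
|E^* \cap \{\|Tb\|_Y > \lambda/2\}| \leq \frac{4}{\lambda^2}\|Tb\|_{L^2(\mathbb{T}^n;Y)}^2 \leq \frac{C}{\lambda^2}\|b\|_{L^q(\mathbb{T}^n;X)}^2,
\end{equation*}
and then estimate $\|b\|_q$ by a truncation of the $b_j$ at height $\sim \lambda$, combining the $L^1$-control $\|b_j\|_1 \leq C\lambda|Q_j|$ with H\"older's inequality in a way that exploits the Sobolev-type gain $\beta$ to compensate the $(1-\alpha)n/2$ loss in the measure of $E^*$. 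This balancing is the delicate technical point, and I expect it to be the main difficulty; once it is executed, the three estimates combine to produce the desired weak-$L^1$ bound $C\|f\|_1/\lambda$.
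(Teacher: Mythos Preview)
Your handling of $Tg$ and of the off-exceptional-set part of $Tb$ is correct and matches the paper. You also identify the genuine difficulty: the $\alpha$-enlarged cubes $Q_j^*$ have $|Q_j^*|\simeq\sigma_j^{n\alpha}\gg|Q_j|$ for small $\sigma_j$, so their union cannot be controlled by $\sum|Q_j|$.

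The gap is in your proposed remedy. You plan to bound $|E^*\cap\{\|Tb\|_Y>\lambda/2\}|$ by $C\lambda^{-2}\|b\|_{L^q}^2$ and then estimate $\|b\|_q$. But $f$ is only assumed to lie in $L^1(\mathbb{T}^n;X)$, so $b=\sum b_j$ need not belong to $L^q$ for any $q>1$: on each $Q_j$ one has $b_j=f-(f)_{Q_j}$, and the pointwise size of $f$ is uncontrolled. Truncating $b_j$ at height $\sim\lambda$ does not rescue the argument: the truncated pieces lose the mean-zero property, so the kernel-condition argument you used on $(E^*)^c$ no longer applies to them, and the tall remainder $b_j\chi_{\|b_j\|_X>K\lambda}$ still lies only in $L^1$. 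There is no place to put it. The vague phrase ``Sobolev-type gain $\beta$'' does not help either, since the raw $b_j$ carry no Sobolev regularity.

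The paper avoids this obstacle by a different mechanism. It uses the \emph{standard} dilated exceptional set $c\Omega=\bigcup_j cQ_j$ (whose measure \emph{is} controlled by $\|f\|_1/\lambda$), and then, for the small cubes ($\sigma_j<1$), it mollifies each $b_j$ at the finer scale $\sigma_j^{1/\alpha}$: write $b_j=(b_j-b_j*\varphi_j)+b_j*\varphi_j$. For the first piece one exploits that $\varphi_j$ is supported in $|w-y|<\sigma_j^{1/\alpha}$, so off $c\Omega$ one has $|x-w|\gtrsim\sigma_j=(\sigma_j^{1/\alpha})^\alpha$ and the hypothesis \cref{eq:hyp-sigma<1} applies with $\sigma=\sigma_j^{1/\alpha}$. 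For the smoothed piece $F'=\sum b_j*\varphi_j$ the convolution manufactures the missing regularity: one proves $\|J^{-\beta}F'\|_{L^2}^2\leq C\lambda\|f\|_1$ via a Fefferman-type adjacency argument, and then uses that $T:L^q\to L^2$ is equivalent to $TJ^\beta:L^2\to L^2$. This mollification step, not a direct $L^q$ estimate on $b$, is the idea that makes the balancing condition $(1-\alpha)n/2\leq\beta$ effective.
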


\begin{proof}
    Let $f \in L^1(\mathbb{T}^n; X)$, let $\lambda > 0$ and let us consider the Calderón-Zygmund decomposition at level $\lambda$. Thus 
    \begin{equation*}
        \Omega = \{x \in \mathbb{T}^n : Mf(x) > \lambda\} = \bigcup_{j=1}^\infty Q_j,
    \end{equation*}
    where $M$ is the maximal operator defined as

    \begin{equation*}
        Mf(x) = \sup_{B\ni x}\frac{1}{|B|}\int_B\|f(y)\|_X\diff y,
    \end{equation*} 
    where $B$ is a ball in $\mathbb{T}^n$. So that we can define $f = g + b$, here $f_{Q_j}$ is the mean value of $f$ over $Q_j$ and 
    \begin{equation*}
        g = f\chi_{\mathbb{T}^n \setminus \Omega} + \sum_{j=1}^\infty f_{Q_j}\chi_{Q_j},
    \end{equation*}
    \begin{equation*}
        b = \sum_{j=1}^\infty(f - f_{Q_j})\chi_{Q_j} = \sum_{j=1}^\infty b_j.
    \end{equation*}
    Moreover, 
    \begin{equation}
        \|g(x)\|_X \leq C\lambda \quad \text{and} \quad \|g\|_{L^1(\mathbb{T}^n; X)} \leq \|f\|_{L^1(\mathbb{T}^n; X)},
        \label{eq:1norm-X}
    \end{equation}
    
    \begin{equation}
        \int_{Q_j} \|b_j(x)\|_X \diff x \leq C|Q_j|\lambda, \quad  \quad \quad \int_{\mathbb{T}^n} b_j \diff x = 0,
        \label{eq:cz-condition}
    \end{equation}
    \begin{equation*}
        |\Omega| \leq \frac{C}{\lambda}\|f\|_{L^1(\mathbb{T}^n; X)}.
    \end{equation*}
    From \cref{eq:1norm-X} we have that $g \in L^2(\mathbb{T}^n; X)$, and also the inequality $\|g\|_{L^2(\mathbb{T}^n; X)}^2 \leq C\lambda \|f\|_{L^1(\mathbb{T}^n; X)}$. So, using Chebyshev's inequality and the $L^2$-boundedness of $T$ we get
    \begin{equation*}
        \lambda^2 |\{x \in \mathbb{T}^n : \|Tg(x)\|_Y > \lambda/2\}| \leq C\|Tg\|_{L^2(\mathbb{T}^n; Y)}^2 \leq C\|g\|_{L^2(\mathbb{T}^n; X)}^2 \leq C\lambda\|f\|_{L^1(\mathbb{T}^n; X)}.
    \end{equation*}
    On the other hand, if we set $c\Omega$ as the union of the cubes $cQ_j$ with the same center and scaled side length, we get
    \begin{equation*}
        |\{ x \in c\Omega : \|Tb(x)\|_Y > \lambda/2  \}| \leq |c\Omega| \leq \frac{C}{\lambda}\|f\|_{L^1(\mathbb{T}^n; X)}.
    \end{equation*}
    Let $\sigma_j$ be the diameter of $Q_j$ and
    \begin{equation*}
        F = \sum_{\sigma_j < 1} b_j\;, \quad  \quad G = \sum_{\sigma_j \geq 1}b_j,
    \end{equation*}
    then for suitable $c>0$ and $Q_j$, centered at $z_j$, we get $|x - z_j| > 4\sigma_j$, when $x \in \mathbb{T}^n \setminus c\Omega$. \\
    Using Chebyshev's inequality and \cref{eq:cz-condition} we get by \cref{eq:hyp-sgima>1} the following estimates
    \begin{align*}
       & \lambda|\{ x \in \mathbb{T}^n \setminus c\Omega : \|TG(x)\|_Y > \lambda/4 \}| \\
       \leq & C \int_{\mathbb{T}^n \setminus c\Omega} \|TG(x)\|_Y \diff x \\
        \leq & 
        C\sum_{\sigma_j \geq 1} \int_{|x-z_j|>4\sigma_j} \int_{Q_j} \|k(x, y) - k(x, z_j)\|_{\mathcal{B}(X, Y)}\|b_j(y)\|_X \diff y \diff x \\
        \leq & C\sum_{\sigma_j \geq 1} \int_{Q_j}\|b_j(y)\|_X \diff y \leq C\|f\|_{L^1(\mathbb{T}^n; X)}.
    \end{align*}
    Now, let $\varphi$ be a test funciton supported on $\{x \in \mathbb{T}^n : |x| \leq 1/c\}$ and such that $\int\varphi(x) \diff x = 1$ with $\varphi \geq 0$. Let us define 
    \begin{equation*}
        \varphi_j(x) = \frac{1}{\sigma_j^{n/\alpha}} \varphi\left( \frac{x}{\sigma_j^{1/\alpha}} \right).
    \end{equation*}
    We write 
    \begin{equation*}
        F = \sum_{\sigma_j < 1}b_j *\varphi_j + \sum_{\sigma_j<1}(b_j - b_j * \varphi_j) = F' + F'' .
    \end{equation*}
    Then, for $x \in\mathbb{T}^n \setminus c\Omega $, we have that
    \begin{align*}
        T(b_j - b_j * \varphi_j)(x) = & \int_{\mathbb{T}^n}k(x, y)b_j(y)\diff y - \int_{\mathbb{T}^n}k(x, w) \int_{\mathbb{T}^n} \varphi_j(w-y)b_j(y)\diff y \diff w \\
        = & \int_{\mathbb{T}^n}\left[\int_{\mathbb{T}^n}[k(x, y) - k(x, w)]\varphi_j(w-y)\diff w\right] b_j(y)\diff y.
    \end{align*}
    Hence, using Chebyshev's inequality we obtain
    \begin{equation}
        \lambda |\{x \in \mathbb{T}^n \setminus c\Omega : \|TF''(x)\|_Y > \lambda/8\}| 
        \label{eq:TF''}
    \end{equation}
    \begin{equation*}
        \leq  C\int_{\mathbb{T}^n \setminus c\Omega}\|TF''(x)\|_Y \diff x
    \end{equation*}
    \begin{equation*}
        \leq  C \sum_{\sigma_j< 1}\int_{\mathbb{T}^n}\left[\int_{\mathbb{T}^n}\|k(x, y) - k(x, w)\|_{\mathcal{B}(X,Y)}\varphi_j(w-y)\diff w\right] \|b_j(y)\|_X\diff y.
    \end{equation*}
    When $x \in \mathbb{T}^n \setminus c\Omega$ and $|y - w| < \sigma_j^{1/\alpha}$ we have that
    \begin{equation*}
        |x-w| \geq |x-z_j| - |z_j - y| - |y - w| > 4\sigma_j - \sigma_j - \sigma_j^{1/\alpha} > 2\sigma_j.
    \end{equation*}

    So we can estimate \cref{eq:TF''} by 
    \begin{equation*}
          \sum_{\sigma_j< 1}\int_{Q_j}\left[\int_{|w-y|<\sigma_j^{1/\alpha}}\int_{|x-w|>2\sigma_j}\|k(x, y) - k(x, w)\|_{\mathcal{B}(X,Y)}\diff x \diff w\right] \|b_j(y)\|_X\diff y
    \end{equation*}
    \begin{equation*}
        \leq C\sum_{\sigma_j<1} \int_{Q_j}\|b_j(y)\|_X\diff y  \leq C\|f\|_{L^1(\mathbb{T}^n; X)}.
    \end{equation*}
    Now, we only have to prove the inequality
    \begin{equation*}
    \lambda |\{ x \in \mathbb{T}^n \setminus c\Omega : \|TF'(x)\|_Y > \lambda/8 \}| \leq C\|f\|_{L^1(\mathbb{T}^n; X)},
    \end{equation*}
    which would be proven if we obtain the estimate 
    \begin{equation}
        \|J^{-\beta} F'\|_{L^2(\mathbb{T}^n; X)}^2 \leq A\lambda \|f\|_{L^1(\mathbb{T}^n; X)},
    \end{equation}
    where $J$ is the Bessel potential of order one. Since $\beta$ satisfies \cref{eq:alpha_condition} we have  that $TJ^{\beta}$ is bounded on $L^2(\mathbb{T}^n;Y)$ and
    \begin{equation*}
        \|T F'\|_{L^2(\mathbb{T}^n; Y)}^2 =\|TJ^{\beta}J^{-\beta }F'\|_{L^2(\mathbb{T}^n; Y)}^2 \leq  C\|J^{-\beta} F'\|_{L^2(\mathbb{T}^n; X)}^2 \leq CA\lambda \|f\|_{L^1(\mathbb{T}^n; X)}.
    \end{equation*}
    Hence, we can use  Chebyshev's inequality to obtain
    \begin{equation*}
        \lambda^2 |\{ x \in \mathbb{T}^n \setminus c\Omega : \|TF'(x)\|_Y > \lambda/8 \}| \leq C \int_{\mathbb{T}^n \setminus c\Omega} \|TF'(x)\|_Y^2\diff x  \leq CA\lambda\|f\|_{L^1(\mathbb{T}^n; X)}.
    \end{equation*}
    We recall that
    \begin{equation*}
        F' = \sum_{\sigma_j < 1}f\chi_{Q_j} * \varphi_j - \sum_{\sigma_j < 1}f_{Q_j}\chi_{Q_j}*\varphi_j.
    \end{equation*}
    We define $x \sim Q_j$ if $x$ belongs to the closure of any cube $Q_k$ adjacent to $Q_j$. Thus, 
    \begin{equation*}
        J^{-\beta}\sum_{\sigma_j < 1}f\chi_{Q_j} * \varphi_j (x) = \sum_{x\sim Q_j}J^{-\beta} (f\chi_{Q_j} * \varphi_j )(x) + \sum_{x\nsim Q_j}J^{-\beta} (f\chi_{Q_j} * \varphi_j )(x)  = F_1(x) + F_2(x).
    \end{equation*}
    So that 
    \begin{equation*}
        (J^{-\beta} F')(x) = F_1(x) + F_2(x) - \sum_{\sigma_j < 1}f_{Q_j}J^{-\beta} (\chi_{Q_j}*\varphi_j)(x).
    \end{equation*}
    Using Fefferman's proof adapted to compact Lie groups \cite{cardona-ruzhansky}, we have that for $x \nsim Q_j$ 
    \begin{equation*}
        \left\|J^{-\beta} (f\chi_{Q_j} *\varphi_j)(x)\right\|_X \leq CJ^{-\beta} \|f_{Q_j}\|_X (\chi_{Q_j}*\varphi_j)(x).
    \end{equation*}
    By \cref{eq:cz-condition} and the fact that $J^{-\beta}$ maps positive functions into positive functions we have that
    \begin{align*}
        \left\| F_2(x) - \sum_{\sigma_j<1} f_{Q_j}J^{-\beta}(\chi_{Q_j} * \varphi_j)(x) \right\|_X = & \left\| \sum_{\sigma_j<1} J^{-\beta}[(f\chi_{Q_j} - f_{Q_j}\chi_{Q_j}) * \varphi_j](x) \right\|_X \\
        \leq & \left\| \sum_{\sigma_j<1} J^{-\beta}(b_j * \varphi_j)(x) \right\|_X \\ 
        \leq & C\lambda \sum_{\sigma_j<1} J^{-\beta} (\chi_{Q_j}*\varphi_j)(x)\\
        \leq & C\lambda \|J^{-\beta}\|_1\left\| \sum_{\sigma_j <1}\chi_{Q_j} *\varphi_j \right\|_{L^\infty(\mathbb{T}^n; X)} \leq C\lambda ,
    \end{align*}
    since the supports of $ \chi_{Q_j} * \varphi_j $ have finite overlapping. On the other hand,
    \begin{align*}
        \left\| F_2 - \sum_{\sigma_j < 1} J^{-\beta} f_{Q_j}\chi_{Q_j} * \varphi_j \right\|_{L^1(\mathbb{T}^n; X)} \leq & \sum_{\sigma_j<1} \left\|J^{-\beta}[\varphi_j * (f - f_{Q_j})\chi_{Q_j}]\right\|_{L^1(\mathbb{T}^n; X)} \\
        \leq & C\sum_{\sigma_j < 1}\int_{Q_j}\|f(y)\|_X \diff y \leq C\|f\|_{L^1(\mathbb{T}^n; X)}.
    \end{align*}
    Combining these two estimates we obtain 
    \begin{equation*}
        \left\| F_2 - \sum_{\sigma_j < 1} J^{-\beta} f_{Q_j}\chi_{Q_j} * \varphi_j \right\|_{L^2(\mathbb{T}^n; X)}^2 \leq C\lambda\|f\|_{L^1(\mathbb{T}^n; X)} .
    \end{equation*}
    Now it only remains to prove the inequality $\|F_1\|_{L^2(\mathbb{T}^n; X)}^2 \leq C\lambda \|f\|_{L^1(\mathbb{T}^n; X)}$, which can be done as in Fefferman's proof, \cite{cardona-ruzhansky}. For fixed $x \in \mathbb{T}^n$, let 
    \begin{equation*}
        F_1^j(x) = \begin{cases}
            J^{-\beta} f\chi_{Q_j}*\varphi_j(x) & \text{if }x \nsim Q_j \\ 0 & \text{if not}.
        \end{cases}
    \end{equation*}
    Thus, $F_1(x)=\sum_{\sigma_j <1} F_1^j(x)$ and $F_1^j(x) \neq 0$ for at most $N$ values. Then,
    \begin{align*}
        \|F_1(x)\|_Y^2 \leq & \left( \sum_{j=1}^N\left\|F_1^{j(x)}(x)\right\|_Y \right)^2 \\
        \leq & \sum_{j,h=1}^N\left\|F_1^{j(x)}(x)\right\|_Y \left\|F_1^{h(x)}(x)\right\|_Y \\
        \leq & 2\left( \sum_{j,h=1}^N \left\|F_1^{j(x)}(x)\right\|_Y^2 + \left\|F_1^{h(x)}(x)\right\|_Y^2 \right) \\
        \leq & 4N\sum_{\sigma_j < 1}\left\|F_1^j(x)\right\|_Y^2.
    \end{align*}
    Hence, using Hausdorff-Young's inequality we have that
    \begin{equation*}
        \|F_1\|_{L^2(\mathbb{T}^n; X)}^2 \leq 4N\sum_{\sigma_j<1} \left\| F_1^j \right\|_{L^2(\mathbb{T}^n; X)}^2 \leq 4N\sum_{\sigma_j<1} \left\| J^{-\beta} \varphi_j \right\|_{L^2(\mathbb{T}^n; X)}^2\left\| f\chi_{Q_j} \right\|_{L^1(\mathbb{T}^n; X)}^2.
    \end{equation*}
    On the other hand, by Plancherel's identity and since $-2\beta < -n$ we get
    \begin{equation*}
        \left\| J^{-\beta} \varphi_j \right\|_{L^2(\mathbb{T}^n; X)}^2 = \sum_{\xi \in \mathbb{Z}^n } \langle\xi\rangle^{-2\beta} \left|(\mathcal{F}_{\mathbb{T}^n} \varphi)\left(\sigma_j^{1/\alpha}\xi\right)\right|^2 \leq     \frac{C}{|Q_j|},
    \end{equation*}
    since $|Q_j| \leq 1$. Finally
    \begin{equation*}
        \|F_1\|_{L^2(\mathbb{T}^n; X)}^2 \leq C\sum_{\sigma_j<1}\frac{1}{|Q_j|} \|f\chi_{Q_j}\|_{L^1(\mathbb{T}^n; X)}^2 \leq C\lambda \|f\|_{L^1(\mathbb{T}^n; X)}.
    \end{equation*}
    Thus, completing the proof.
\end{proof}

Now, we state the main result of this paper.

\begin{theorem}
    Let $T \in \Psi^m_{\rho, \delta}(\mathbb{T}^n \times \mathbb{Z}^n) $, $0 < \rho \leq 1$, $0 \leq \delta < 1$, $m \leq - n [(1-\rho)/2 + \lambda] $, then $T$ and its adjoint $T^*$ are continuous mappings
    \begin{itemize}
        \item[(a)] from the Hardy space $H^1(\mathbb{T}^n)$ into $L^1(\mathbb{T}^n)$,\\
        \item[(b)] from $L^1(\mathbb{T}^n)$ into weak-$L^1(\mathbb{T}^n)$,\\
        \item[(c)] from $L^\infty(\mathbb{T}^n)$ into $\text{BMO}(\mathbb{T}^n)$.
    \end{itemize}
\end{theorem}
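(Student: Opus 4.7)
The plan is to prove each of (a), (b), (c) for $T$ itself; the corresponding statements for $T^*$ then follow because $T^* \in \Psi^m_{\rho,\delta}(\mathbb{T}^n \times \mathbb{Z}^n)$ by the adjoint theorem in \cref{section:prelims}, so every argument applies verbatim to $T^*$ in place of $T$. Note at the outset that the hypothesis $m \leq -n[(1-\rho)/2 + \lambda]$ puts us in the regime where the kernel estimates of \cref{theo:sigma-kernel-estimate}(a), (b), (c) all hold, where \cref{theo:22boundT} yields $T : L^2 \to L^2$ boundedness (since $m \leq -n\lambda$), and where \cref{theo:Lbounds}(b) yields $T : L^{2/(2-\rho)} \to L^2$ boundedness.

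For part (a), I would invoke the atomic decomposition of $H^1(\mathbb{T}^n)$ and reduce, by a standard density argument, to proving a uniform bound $\|Ta\|_{L^1(\mathbb{T}^n)} \leq C$ over all $H^1$-atoms $a$ supported in a ball $B = B(z,\sigma)$ with $\|a\|_\infty \leq |B|^{-1}$ and $\int a\,dx = 0$. Split $\mathbb{T}^n = B^* \cup (B^*)^c$, where $B^* := B(z, 2\sigma^\rho)$ if $\sigma < 1$ and $B^* := B(z, 2\sigma)$ if $\sigma \geq 1$. On the far region, the cancellation $\int a = 0$ permits replacing $k(x,y)$ by $k(x,y) - k(x,z)$, and Fubini combined with \cref{theo:sigma-kernel-estimate}(a),(b) gives
\[
    \int_{(B^*)^c}|Ta(x)|\,dx \leq \|a\|_\infty \int_B \int_{(B^*)^c}|k(x,y)-k(x,z)|\,dx\,dy \leq C \|a\|_\infty |B| \leq C.
\]
On the near region, Hölder's inequality together with \cref{theo:Lbounds}(b) gives
\[
    \int_{B^*}|Ta|\,dx \leq |B^*|^{1/2}\|Ta\|_{L^2(\mathbb{T}^n)} \leq C |B^*|^{1/2}\|a\|_{L^{2/(2-\rho)}(\mathbb{T}^n)} \leq C \sigma^{\rho n/2}\,|B|^{-\rho/2} \leq C,
\]
where I used $\|a\|_{L^{2/(2-\rho)}(\mathbb{T}^n)} \leq \|a\|_\infty |B|^{(2-\rho)/2} = |B|^{-\rho/2}$.

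For part (b), I would apply \cref{theo:L1-weak} directly with $X = Y = \mathbb{C}$, $\alpha = \rho$, $\beta = n(1-\rho)/2$, and $q = 2/(2-\rho)$. The identities $1/q = 1/2 + \beta/n$ and $(1-\alpha)n/2 = \beta < n/2$ are immediate, the $L^2 \to L^2$ boundedness of $T$ is \cref{theo:22boundT}, the $L^{q} \to L^2$ boundedness is \cref{theo:Lbounds}(b), and the operator-valued kernel conditions \cref{eq:hyp-sigma<1,eq:hyp-sgima>1} specialise to the scalar-valued bounds already established in \cref{theo:sigma-kernel-estimate}(a),(b). For part (c), I would use the $H^1$--$\mathrm{BMO}$ duality on the torus: part (a) applied to $T^*$ yields $T^* : H^1(\mathbb{T}^n) \to L^1(\mathbb{T}^n)$, whose dual assertion is exactly $T : L^\infty(\mathbb{T}^n) \to \mathrm{BMO}(\mathbb{T}^n)$.

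The main obstacle I anticipate is the delicate matching of exponents in the near-region estimate of (a): the order $m \leq -n[(1-\rho)/2 + \lambda]$ is chosen precisely so that the Bessel-potential smoothing underlying \cref{theo:Lbounds}(b) produces the gain $|B|^{-\rho/2}$ that exactly cancels the factor $|B^*|^{1/2} = C\sigma^{\rho n/2}$ from the enlarged ball, and the radius $\sigma^\rho$ of that enlargement is itself forced by the range of validity of \cref{theo:sigma-kernel-estimate}(b). Beyond this careful balancing, the argument is essentially a transfer of the \'Alvarez--Hounie and \'Alvarez--Milman schemes to the discrete Fourier framework of Ruzhansky--Turunen on $\mathbb{T}^n \times \mathbb{Z}^n$, with all the analytic input packaged in the preceding kernel and $L^p$-estimates.
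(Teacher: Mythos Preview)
Your argument for parts (a) and (b) is essentially identical to the paper's: same atom decomposition, same enlarged ball $B^* = B(z,2\sigma^\rho)$ (resp.\ $B(z,2\sigma)$ for $\sigma\geq 1$), same use of \cref{theo:Lbounds}(b) on the near region and \cref{theo:sigma-kernel-estimate}(a),(b) on the far region, and for (b) the same specialisation of \cref{theo:L1-weak} with $\alpha=\rho$, $\beta=n(1-\rho)/2$, $q=2/(2-\rho)$. (A cosmetic point: your displayed near-region bound $|B^*|^{1/2}\leq C\sigma^{\rho n/2}$ is written only for the case $\sigma<1$; for $\sigma\geq 1$ the paper instead uses the $L^2\to L^2$ bound and $|B^*|^{1/2}\leq C\sigma^{n/2}$, though on the compact torus this case is trivial anyway.)

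The genuine difference is part (c). The paper proves $L^\infty\to\mathrm{BMO}$ \emph{directly}: given a ball $B=B(z,\sigma)$ it writes $f=f\chi_{B'}+f\chi_A$, sets $b=Tf_2(z)$, and controls the mean oscillation using \cref{theo:Lbounds}(a) (the $L^2\to L^{2/\rho}$ bound) for the local piece and the ``transposed'' kernel estimate \cref{theo:sigma-kernel-estimate}(c) for the tail. You instead deduce (c) from (a) applied to $T^*$ via $H^1$--$\mathrm{BMO}$ duality. Your route is shorter and perfectly valid---the paper explicitly records both the adjoint theorem and the Folland--Stein duality needed---and it makes clear why (a) and (c) are formally dual statements. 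The paper's direct proof, on the other hand, is self-contained and explains why the estimate \cref{theo:sigma-kernel-estimate}(c) (with the weaker order hypothesis $m\leq -n(1-\rho)/2$, no $\lambda$) is stated and proved separately: it is exactly the kernel bound for $k(y,x)-k(z,x)$ that the direct $\mathrm{BMO}$ argument consumes. In your approach that estimate is never invoked by name, though it is of course what underlies \cref{theo:sigma-kernel-estimate}(b) for the kernel of $T^*$.
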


\begin{proof}
    \begin{itemize}
        \item[(a)] Let $a$ be an $H^1(\mathbb{T}^n)$-atom supported in $B(z, \sigma)$, satisfying $\|a\|_\infty\leq |B|^{-1}$ and the cancellation condition. If $\sigma < 1$, set $B'=B(z, 2\sigma^\rho)$ and $A = \mathbb{T}^n\setminus B'$. Then 
        \begin{equation*}
            \int_{\mathbb{T}^n}|Ta(x)|\diff x \leq \int_{B'}|Ta(x)|\diff x  + \int_{A}|Ta(x)|\diff x = I_1 + I_2.
        \end{equation*}
        Now, using \cref{theo:Lbounds} (b) we get 
        \begin{align*}
            I_1 \leq & \|\chi_{B'}\|_{2}\|Ta\|_{{2}}\leq C\sigma^{\rho n/2}\|a\|_{{2/(2-\rho)}} \\ \leq & C\sigma^{\rho n/2} \left[ \int_B |B|^{-2/(2-\rho)}\diff x\right]^{(2-\rho)/2} 
            \leq  C\sigma^{\rho n/2}|B|^{-\rho/2} \leq C,
        \end{align*}
        and using \cref{theo:sigma-kernel-estimate} (b) we have the estimate,
        \begin{equation*}
            I_2 \leq \int_A\int_B |k(x, y) - k(x, z)||a(y)|\diff y \diff x \leq \sup_{|y-z|<\sigma}\int_A|k(x,y)-k(x, z)|\diff x \leq C.
        \end{equation*}
        When $\sigma \geq 1$, we set $B'=B(z, 2\sigma)$ and $A = \mathbb{T}^n\setminus B'$. Then we split the $L^1(\mathbb{T}^n)$-norm into $I_1 + I_2$ as above. Now we use \cref{theo:Lbounds} (a) to get
        \begin{align*}
            I_1 \leq & \|\chi_{B'}\|_{2}\|Ta\|_{{2}}\leq C\sigma^{ n/2}\|a\|_{{2}} \\ 
            \leq & C\sigma^{ n/2} \left[ \int_B |B|^{-2}\diff x\right]^{1/2}  \leq C\sigma^{n/2}|B|^{-1/2}\leq C,
        \end{align*}
        and we use \cref{theo:sigma-kernel-estimate} (a) to estimate $I_2$.\\

        \item[(b)] Notice that by \cref{theo:22boundT} we can set $q$ as $2/(2-\rho)$ in the hypothesis of \cref{theo:L1-weak}. Also, setting $\alpha$ as $\rho$ and $\beta$ as $n(1-\rho)/2$ satisfy the condition in  \cref{eq:alpha_condition}. Then, by \cref{theo:sigma-kernel-estimate} we get the required conditions  to prove the result.\\

        \item[(c)] Let $B = B(z, \sigma)$. If $\sigma < 1$, set $B'=B(z, 2\sigma^\rho)$ and $A = \mathbb{T}^n\setminus B'$ so we get $f = f\chi_{B'} + f\chi_A = f_1 + f_2$. Now, set $b = Tf_2(z)$, which is well defined since $Tf_2$ is smooth in $B'$. Then 
        \begin{equation*}
            \frac{1}{|B|}\int_B|Tf(x) - b| \diff x \leq \frac{1}{|B|}\int_B|Tf_1(x)|\diff x + \frac{1}{|B|}\int_B|Tf_2(x) -b|\diff x = I_1 + I_2.
        \end{equation*}
        By \cref{theo:Lbounds} (a) we have the inequalities
        \begin{equation*}
            I_1 \leq \left\|\frac{1}{|B|}\chi_{B'}\right\|_{2/(2-\rho)}\|Tf_1\|_{2/\rho}\leq C|B|^{-\rho/2}\|f_1\|_2 \leq C|B|^{-\rho/2}\left[\int_{B'}\|f\|^2_\infty\diff x\right]^{1/2} \leq C \|f\|_\infty,
        \end{equation*}
        and using \cref{theo:sigma-kernel-estimate} (c) we get 
        \begin{equation*}
            I_2 \leq \frac{1}{|B|}\int_B\int_A|k(x, y) - k(z, y)||f(y)|\diff y \diff x \leq \sup_{|y-z|<\sigma}\int_A|k(x, y) - k(z, y)|\|f\|_\infty \diff y \leq C \|f\|_\infty.
        \end{equation*}
        When $\sigma \geq 1$, we set $B'=B(z, 2\sigma)$ and $A = \mathbb{T}^n\setminus B'$. Then we split the BMO-norm into $I_1 + I_2$ as above. Now we use \cref{theo:Lbounds} (b) to get
        \begin{equation*}
            I_1 \leq \left\|\frac{1}{|B|}\chi_B\right\|_2\left\|Tf_1\right\|_2 = |B|^{-1/2}\|Tf_1\|_2 \leq C|B|^{-1/2}\left[\int_{B'}\|f\|^2_\infty\diff x\right]^{1/2} \leq C \|f\|_\infty,
        \end{equation*}
        and we use \cref{theo:sigma-kernel-estimate} (a) to estimate $I_2$. Thus, we get the inequality
        \begin{equation*}
            \|Tf\|_{\mathrm{BMO}(\mathbb{T}^n)} = \sup_B \inf_{b\in \mathbb{C}} \frac{1}{|B|} \int_B |Tf(x) - b|\diff x \leq C\|f\|_{L^\infty(\mathbb{T}^n)}.
        \end{equation*}
        Thus, completing the proof for $T$. 
    \end{itemize}  
    Now, notice that the only properties of $T$ that were used in this proof are the order of the operator and the fact the its kernel satisfies \cref{theo:sigma-kernel-estimate}, properties that also apply to $T^*$. Hence the result follows for $T^*$ as well.
\end{proof}
Now we can employ the Fefferman-Stein complex interpolation argument, see \cite[pp.~159]{fefferman-stein}, to obtain the boundedness of pseudo-differential operators from $L^p(\mathbb{T}^n)$ into itself.
\begin{theorem}
    Let $T \in \Psi^m_{\rho, \delta}(\mathbb{T}^n \times \mathbb{Z}^n) $, $0 < \rho \leq 1$, $0 \leq \delta < 1$ and  
    \begin{equation}
    m \leq - n \left[(1-\rho)\left|\frac{1}{p} - \frac{1}{2}\right| + \lambda\right] .
    \label{eq:Lp-restriction}
    \end{equation} 
    Then $T$  is a continuous mapping from $L^p(\mathbb{T}^n)$ into itself.
    \label{theo:Lp-bounds}
\end{theorem}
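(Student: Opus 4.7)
The plan is to use the Fefferman-Stein complex interpolation argument, reducing to the endpoint boundedness facts already established in the preceding theorems and in \cref{theo:22boundT}. First, note that since $T^* \in \Psi^m_{\rho,\delta}(\mathbb{T}^n\times\mathbb{Z}^n)$ and the restriction \cref{eq:Lp-restriction} is invariant under $p \mapsto p'$, the case $2<p<\infty$ follows from the case $1<p'<2$ by duality. The case $p=2$ is immediate from \cref{theo:22boundT} once we note that \cref{eq:Lp-restriction} becomes $m \leq -n\lambda$. So it suffices to treat $1<p<2$.

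Write $\theta := 2/p - 1 \in (0,1)$, so that $|1/p - 1/2| = \theta/2$ and the hypothesis reads $m \leq -n[(1-\rho)\theta/2 + \lambda]$. On the closed strip $\bar{S} = \{z \in \mathbb{C}: 0\leq \mathrm{Re}(z)\leq 1\}$, consider the analytic family
\begin{equation*}
    T_z := e^{(z-\theta)^2}\, J^{-n(1-\rho)(z-\theta)/2}\, T,
\end{equation*}
so that $T_\theta = T$. Since the symbol $\langle\xi\rangle^{-n(1-\rho)(z-\theta)/2}$ of the complex Bessel potential has modulus $\langle\xi\rangle^{-n(1-\rho)\mathrm{Re}(z-\theta)/2}$ and derivatives growing only polynomially in $|z|$ (through logarithmic factors), the symbol of $T_z$ belongs to $S^{m - n(1-\rho)\mathrm{Re}(z-\theta)/2}_{\rho,\delta}(\mathbb{T}^n\times\mathbb{Z}^n)$ with seminorms polynomial in $|\mathrm{Im}(z)|$. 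The Gaussian factor $e^{(z-\theta)^2}$ provides the subexponential decay $e^{-|\mathrm{Im}(z)|^2}$ needed to apply the three-lines lemma.

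Next I would check the endpoint estimates. On $\mathrm{Re}(z)=0$, the order of $T_z$ is $m + n(1-\rho)\theta/2 \leq -n\lambda$, hence \cref{theo:22boundT} yields an $L^2(\mathbb{T}^n)$-bound with norm at most polynomial in $|\mathrm{Im}(z)|$, times the decay factor $e^{\theta^2 - \mathrm{Im}(z)^2}$. On $\mathrm{Re}(z)=1$, the order of $T_z$ is $m - n(1-\rho)(1-\theta)/2 \leq -n[(1-\rho)/2 + \lambda]$, so the previous $H^1$--$L^1$ and $L^\infty$--$\mathrm{BMO}$ theorem (applied to the real part of the family) provides the required boundedness, again with at most polynomial growth in $|\mathrm{Im}(z)|$ controlled by the Gaussian factor.

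Finally, the Fefferman-Stein complex interpolation between $H^1 \to L^1$ at $\mathrm{Re}(z)=1$ and $L^2 \to L^2$ at $\mathrm{Re}(z)=0$ gives boundedness of $T_\theta = T$ on $L^p(\mathbb{T}^n)$ with $1/p = \theta\cdot 1 + (1-\theta)\cdot 1/2$, i.e., precisely $\theta = 2/p-1$. The main technical point — and the only real obstacle — is verifying that the $z$-dependent symbol seminorms grow at most polynomially in $|\mathrm{Im}(z)|$ so that, after multiplication by $e^{(z-\theta)^2}$, the hypotheses of the three-lines lemma are satisfied; this is a routine check on $\langle\xi\rangle^{-n(1-\rho)(z-\theta)/2}$ together with the fact that the constants in \cref{theo:22boundT} depend only on finitely many symbol seminorms.
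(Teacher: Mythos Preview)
Your proof is correct and follows essentially the same route as the paper: Fefferman--Stein complex interpolation between the $L^2$ bound of \cref{theo:22boundT} and the $H^1\to L^1$ endpoint from the preceding theorem. The paper's version is terser (it does not write out the analytic family $T_z$ explicitly) and handles $2<p<\infty$ by interpolating against the $L^\infty\to\mathrm{BMO}$ endpoint rather than by your duality reduction, but the underlying argument is the same.
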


\begin{proof}
    We can use the complex interpolation argument between $(H^1(\mathbb{T}^n),\; L^1(\mathbb{T}^n))$ and  $(L^2(\mathbb{T}^n),\; L^2(\mathbb{T}^n))$ for $1 < p < 2$, and between $(L^2(\mathbb{T}^n),\; L^2(\mathbb{T}^n))$ and the pair $(L^1(\mathbb{T}^n),\; \mathrm{BMO}(\mathbb{T}^n))$ for $2 < p<\infty$. In fact, $T$ is bounded on $L^2(\mathbb{T}^n)$ if $m\leq -n\lambda$ by \cref{theo:22boundT}. On the other hand, if $m\leq -n[(1-\rho)/2 + \lambda]$, then $T$ will be bounded from $H^1(\mathbb{T}^n)$ into $L^1(\mathbb{T}^n)$ and from $L^\infty(\mathbb{T}^n)$ into $\mathrm{BMO}(\mathbb{T}^n)$. Then $T$ will be bounded on $L^p(\mathbb{T}^n)$, by the Fefferman-Stein interpolation argument, for
    \begin{equation*}
        \frac{1}{p} = \frac{1-\theta}{q} + \frac{\theta}{2},
    \end{equation*}
    and $0<\theta<1$, with $q=1$ or $q=\infty$. Which is equivalent to the restriction 
    \begin{equation*}
        m \leq -n\lambda\theta - n\left[(1-\rho)/2 +\lambda\right](1 - \theta).
    \end{equation*}
    Namely, that $m$ satisfies \cref{eq:Lp-restriction}, completing the proof.
\end{proof}
We can use the properties of Bessel potential operators to extend the $L^p(\mathbb{T}^n)$-boundedness result to the $L^p(\mathbb{T}^n)$-$L^q(\mathbb{T}^n)$ case. In the case of the torus, the following theorem extends the $L^p(\mathbb{T}^n)$-$L^q(\mathbb{T}^n)$-boundedness result in  \cite{cardona-delgado-kumar} to the complete range $0 < \rho \leq 1$, $0 \leq \delta < 1$.
\begin{theorem}
    Let $T \in \Psi^m_{\rho, \delta}(\mathbb{T}^n \times \mathbb{Z}^n) $, $0 < \rho \leq 1$, $0 \leq \delta < 1$, then $T$  is a continuous mapping from $L^p(\mathbb{T}^n)$ into $L^q(\mathbb{T}^n)$ for $1<p\leq q<\infty$ when 
    \begin{itemize}
        \item[(a)] if $1<p\leq 2\leq q$ and 
        \begin{equation}
            m\leq -n \left( \frac{1}{p} - \frac{1}{q} + \lambda \right),
            \label{eq:Lp-Lq-bounds-a}
        \end{equation}
        \item[(b)] if $2\leq p \leq q$ and 
        \begin{equation}
            m \leq -n \left[ \frac{1}{p} - \frac{1}{q} + (1-\rho)\left(\frac{1}{2} - \frac{1}{p}\right) + \lambda \right],
            \label{eq:Lp-Lq-bounds-b}
        \end{equation}
     
        \item[(c)] if $p\leq q\leq 2$ and  
        \begin{equation}
            m \leq -n \left[ \frac{1}{p} - \frac{1}{q} + (1-\rho)\left(\frac{1}{q} - \frac{1}{2}\right) + \lambda \right].
            \label{eq:Lp-Lq-bounds-c}
        \end{equation}
    \end{itemize}
    \label{theo:Lp-Lq-bounds}
\end{theorem}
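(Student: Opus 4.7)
The plan is to bootstrap from the previously established $L^p$-boundedness (\cref{theo:Lp-bounds}) and the $L^2$-boundedness (\cref{theo:22boundT}) by factoring $T$ through Bessel potentials $J^s=\mathrm{Op}(\langle\xi\rangle^s)$, combined with the Hardy-Littlewood-Sobolev inequality on the torus, which ensures that $J^{-s}:L^p(\mathbb{T}^n)\to L^q(\mathbb{T}^n)$ is bounded whenever $s=n(1/p-1/q)\geq 0$ with $1<p\leq q<\infty$. Since $J^s$ is a Fourier multiplier, the composition formula of the toroidal pseudo-differential calculus from \cite{ruzhansky-turunen} guarantees $J^{a}\,T\,J^{b}\in\Psi^{m+a+b}_{\rho,\delta}(\mathbb{T}^n\times\mathbb{Z}^n)$ for any $a,b\in\mathbb{R}$, while $J^{-s}\,J^{s}=I$ is exact as a Fourier multiplier. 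These two inputs are the only ingredients needed beyond the theorems already proved.

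For part (b), set $s_0=n(1/p-1/q)\geq 0$ and write $T=J^{-s_0}\circ\widetilde T$ with $\widetilde T=J^{s_0}\,T\in\Psi^{m+s_0}_{\rho,\delta}$. The hypothesis \cref{eq:Lp-Lq-bounds-b} is equivalent to $m+s_0\leq -n[(1-\rho)(1/2-1/p)+\lambda]$, so \cref{theo:Lp-bounds} gives $\widetilde T:L^p\to L^p$ and HLS gives $J^{-s_0}:L^p\to L^q$, whence the composition is bounded $L^p\to L^q$. For part (c), use instead $T=\widetilde T\circ J^{-s_0}$ with $\widetilde T=T\,J^{s_0}\in\Psi^{m+s_0}_{\rho,\delta}$; the hypothesis \cref{eq:Lp-Lq-bounds-c} now rewrites as $m+s_0\leq -n[(1-\rho)(1/q-1/2)+\lambda]$, so \cref{theo:Lp-bounds} yields $\widetilde T:L^q\to L^q$, while HLS again supplies $J^{-s_0}:L^p\to L^q$.

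For part (a), since $p\leq 2\leq q$, set $s_1=n(1/p-1/2)\geq 0$ and $s_2=n(1/2-1/q)\geq 0$, and decompose
\[
T=J^{-s_2}\circ\bigl(J^{s_2}\,T\,J^{s_1}\bigr)\circ J^{-s_1}.
\]
The middle operator $S:=J^{s_2}\,T\,J^{s_1}$ lies in $\Psi^{m+s_1+s_2}_{\rho,\delta}$, and \cref{eq:Lp-Lq-bounds-a} forces its order to be at most $-n\lambda$, so $S:L^2\to L^2$ by \cref{theo:22boundT}; HLS takes care of $J^{-s_1}:L^p\to L^2$ and $J^{-s_2}:L^2\to L^q$, producing the full chain $L^p\to L^2\to L^2\to L^q$. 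Beyond the routine arithmetic of the orders (a straightforward check that the numerical hypotheses in (a), (b), (c) are precisely what the reductions demand), the only non-trivial input is the toroidal Hardy-Littlewood-Sobolev inequality for $J^{-s}$, which follows either by periodization of the corresponding Euclidean fractional integration inequality or directly from the toroidal Bessel-potential kernel estimates available in \cite{ruzhansky-turunen}; with that in hand, the three cases are immediate.
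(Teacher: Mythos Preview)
Your proposal is correct and follows essentially the same route as the paper: in all three cases one factors $T$ through Bessel potentials and invokes the Hardy--Littlewood--Sobolev inequality together with \cref{theo:22boundT} (for (a)) or \cref{theo:Lp-bounds} (for (b), (c)), with your $s_0,s_1,s_2$ being exactly the negatives of the paper's $m',m_1,m_2$. The only cosmetic difference is that you make explicit the use of the toroidal composition formula $J^aTJ^b\in\Psi^{m+a+b}_{\rho,\delta}$, which the paper uses implicitly.
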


\begin{proof}
    \begin{itemize}
        \item[(a)] Let $m_1 = -n(1/p - 1/2)$ and $m_2=-n(1/2 - 1/q)$. Then $m\leq m_1+m_2-n\lambda$ and $J^{-m_2}TJ^{-m_1}$ is bounded on $L^2(\mathbb{T}^n)$ by \cref{theo:22boundT}. Also, by the Hardy-Littlewood-Sobolev inequality, we get that $J^{m_1}$ is bounded from $L^p(\mathbb{T}^n)$ into $L^2(\mathbb{T}^n)$ and $J^{m_2}$ is bounded from $L^q(\mathbb{T}^n)$ into $L^2(\mathbb{T}^n)$. Hence 
        \begin{equation*}
            \| J^{m_2}(J^{-m_2}TJ^{-m_1})J^{m_1}f \|_q \leq C\|(J^{-m_2}TJ^{-m_1})J^{m_1}f  \|_2 \leq C\| J^{m_1}f \|_2 \leq C \| f\|_p.
        \end{equation*}
        Thus, proving the result.\\

        \item[(b)] Set $m' = -n(1/p - 1/q)$, so that $J^{m'}$ is continuous from  $L^p(\mathbb{T}^n)$ into $L^q(\mathbb{T}^n)$ and $J^{-m'}T$ has order $m-m' \leq -n[(1-\rho)(1/2 - 1/q) + \lambda]$ and is a continuous mapping from $L^p(\mathbb{T}^n)$ into itself. Thus 
        \begin{equation*}
            \| J^{m'}(J^{-m'}T)f \|_q \leq C\|(J^{-m'}T)f\|_p \leq C\|f\|_p.
        \end{equation*}
        Hence, obtaining the desired estimate.\\

        \item[(c)] As above, we set  $m' = -n(1/p - 1/q)$ so that $TJ^{-m'}$ countinously maps  $L^q(\mathbb{T}^n)$ into itself and
        \begin{equation*}
            \|(TJ^{-m'})J^{m'}f\|_q \leq C\|J^{m'}f\|_q \leq C\|f\|_p.
        \end{equation*}
        Thus, completing the proof.
    \end{itemize}
\end{proof}
\begin{remark}
    Please notice that \cref{eq:Lp-Lq-bounds-b} reduces to \cref{eq:Lp-Lq-bounds-a} when $p=2$ and \cref{eq:Lp-Lq-bounds-c} reduces to \cref{eq:Lp-Lq-bounds-a} when $q=2$. Moreover, \cref{theo:Lp-Lq-bounds} reduces to \cref{theo:Lp-bounds} when $p=q$.
\end{remark}

\bibliographystyle{amsplain}

\end{document}